
\documentclass[letterpaper,onecolumn,superscriptaddress,11pte,shorttitle=Spiders,nopdfoutputerror,accepted=2021-11-22, issue=1, volume=4, shorttitle=papers/compositionality-4-1]{compositionalityarticle}

\input{macro-spid}

\begin{document}

\title{\begin{minipage}{1\linewidth}\centering Lambek pregroups are Frobenius spiders
\\ in preorders
\end{minipage}}
\author{Dusko Pavlovic}
\email{dusko@hawaii.edu}
\homepage{dusko.org}
\orcid{0000-0002-9855-6861}
\thanks{Supported by NSF and AFOSR.}
\affiliation{University of Hawaii, Honolulu HI, USA}
\maketitle

\begin{abstract}
\noindent \emph{"Spider"}\/ is a nickname of \emph{special Frobenius algebras}, a fundamental structure from mathematics, physics, and computer science. \emph{Pregroups}\/ are a fundamental structure from linguistics. Pregroups and spiders have been used together in natural language processing: one for syntax, the other for semantics. It turns out that pregroups themselves can be characterized as pointed spiders in the category of preordered relations, where they naturally arise from grammars. The other way around, preordered spider algebras in general can be characterized as unions of pregroups. This extends the characterization of relational spider algebras as disjoint unions of groups. The compositional framework that emerged with the results suggests new ways to understand and apply the basis structures in machine learning and data analysis.
%
%
%
\end{abstract}

\section{Introduction}\label{Sec:intro}

\subsection{Background} Special Frobenius algebras, lovingly nicknamed \emph{spiders}\/ due to their characteristic normal form \eqref{eq:spider}, are frequently encountered on a wide swath of categorical applications, from topological quantum field theories, through group representations and linear algebra, to computability and complexity \cite{PavlovicD:MSCS13,KockJ:frobenius,PavlovicD:IC12,PavlovicD:MonCom,PavlovicD:MonCom3,StreetR:Frobenius}. Pregroups are, on the other hand, the algebraic structure underlying modern categorial grammars \cite{Lambek-Casadio:four,LambekJ:pregroups,LambekJ:grammar97,PrellerA:linear}. Pregroups and spiders have been used together in computational linguistics, one for presenting the syntax, the other for assigning the vector space semantics \cite{Mehrnoosh-Moortgat:parasitic,Mehrnoosh:word-one,Mehrnoosh:word-two}. 
The string diagram notation has been instrumental in combining these two structures of disjoint origins, at different levels of language modeling, to open an alley towards reconciling the two leading paradigms of natural language processing, the distributional and the compositional  \cite{LewisM:transl,Coecke-Martha-Marsden,CoeckeB:discocat,Heunen-Sadrzadeh}.

\smallskip
Lambek's \emph{Switching Lemma} \cite[Prop.~2]{LambekJ:grammar97} says that any sequent in a \emph{free}\/ pregroup can be normalized to a form where all contraction steps precede all expansion steps. It was proved inductively, as a form of cut elimination supported by the free pregroup sequents \cite[Ch.~27]{LambekJ:word-sentence}. The name \emph{"Switching Lemma"}\/ was introduced in Buszkowski's expositions of his decision procedure for the pregroup sequent calculus \cite{BuszkowskiW:synthese07}, and of the equivalence of pregroups and context-free languages \cite{Buszkowski-Moroz:CFL}. The general  method of cut elimination was introduced in Gentzen's seminal paper \cite{GentzenG:seq}, and remained one of the central concepts of sequent calculi.

\subsection{Results} Thm.~\ref{thm:one} says that pregroups and pointed spiders are different presentations of the same variety of sequent algebras. It follows that all pregroups, not only the free ones, reduce to the normal form where all contractions are performed before all expansions. The property does not depend on inductive switching or cut elimination, which only arise in free sequent algebras. While sequent calculi generate free sequent algebras just like term calculi generate the equational algebras, the algebraic reductions, often conveniently presented as categorical diagram chasing, apply to both families of general algebras, not just the free ones. Thm.~\ref{thm:two} is also a statement about general sequent algebras. It says that the residuation monoids are characterized by the Frobenius conditions, which determine the shape of a spider. Note that formal grammars are defined as a special family of sequent calculi (those where the generators are partitioned into the terminals and the nonterminals \cite{ChomskyN:three}), and that the categorial grammars are the sequent calculi corresponding to the residuation monoids \cite[\S 1.2]{Bar-HillelY:categorial,LambekJ:math-sentence,LambekJ:rings}. Thm.~\ref{thm:three} is a statement about the spider algebras in general, which are more general than those in Thm.~\ref{thm:one} because they are not necessarily pointed, but less general than the Frobenius algebras in Thm.~\ref{thm:two}, because the spider algebras are required to satisfy the \emph{special}\/ isometry condition, in addition to the Frobenius conditions. Thm.~\ref{thm:three} says that a general spider can be decomposed into a consistent union of pregroups; and that the union of any consistent family of pregroups yields a spider.  

\smallskip
The upshot is that the spiders do not appear only as the orthonormal bases of the vector space models used to assign meaning to pregroup sentences but also as the shapes of pregroups themselves. Any spider of preordered relations is a union of pregroups. A pregroup is precisely a pointed spider.

\subsection{Related work} The characterization of the relational spiders over preorders extends the characterization \cite{PavlovicD:QI09} of the relational spiders over sets:
\bear
\frac{\mbox{unions of pregroups}}{\mbox{spiders of preordered relations}} & = & \frac{\mbox{disjoint unions of groups}}{\mbox{spiders of relations}} 
\eear
While \cite{PavlovicD:QI09} was mainly concerned with the group structure of the basis elements as a quantum-algorithmic resource \cite{PavlovicD:Qabs12}, pregroups are a structure used in natural language processing \cite{CoeckeB:discocat,CoeckeB:QNLP21,CoeckeB:QNLP20}. The structure of the basis elements and the basis sets correspond, respectively, to the syntactic and the semantical aspects. Modern applications require seamless integration of both aspects \cite{JurafskyD:book-2020}. The presented results have been directly inspired by the DisCoCat program  \cite{CoeckeB:discocat,Heunen-Sadrzadeh,Mehrnoosh:word-one,Mehrnoosh:word-two}, and indirectly by the remarkable headways in the  practices of computational linguistics over theory \cite{GPT3}. The author is at an early stage of studying the connections between the many fronts of progress in computational linguistics  \cite{LewisM:NN,MoortgatM:Dyck,Mehrnoosh-Moortgat:parasitic,PrellerA:linear}.  Beyond the current horizon, the conceptual veins from spiders' roots in categorical quantum mechanics \cite{PavlovicD:QMWS,PavlovicD:Qabs12} to their branchings through the theories of language and communication \cite{Heunen-Sadrzadeh} continue to feed not only the theoretical explorations of sequent algebra \cite{PavlovicD:FunSem17,BonchiF:LICS18,SobocinskiP:POPL21} but also the practical applications and commercial technology transfers  \cite{CoeckeB:QNLP21,CoeckeB:QNLP20}.

\subsection{Why sequent algebra?}
To relate Lambek's pregroups and Frobenius' algebras, both structures are presented in terms of preordered relations. This framework is slightly more general than the standard presentation, and it could be viewed as a novel contribution, or as an unnecessary distraction. In the following, I argue that it is neither.

\smallskip
Equational theories are usually presented as pairs $(\OOO, \EEE)$, where $\OOO$ is a set of operations given with arities, and $\EEE$ is a set of equations between some terms generated from $\OOO$. Algebra is the practice of deriving other valid equations. If the operations are presented as arrows, the composite operations are the paths along the arrows, the  equations are the faces of directed graphs built from the arrows, and the sequences of equations can be presented as \emph{diagram chases}, going back to \cite{LambekJ:rings}. 

\smallskip

Sequent theories are also specified as pairs $(\OOO, \EEE)$, but the operations from $\OOO$ are now given as sequents, and $\EEE$ is a set of implications between some sequents derived from $\OOO$. Since sequents are bulkier than  terms, the sequences of sequent implications are usually written as proof trees. This gives the sequent calculi their typical appearance. Since the present work only gives rise to relatively simple sequents, I will not display the proof trees, but save space by writing the sequent derivations horizontally, with the sequents usually enclosed in parentheses. This should not conceal the fact that we are treading on the well ploughed ground of sequent algebra, going back to Axel Thue \cite{ThueA:selected}. The category $\PRel$ is just a convenient categorical framework for it. %
%
%
%
But the use of sequents goes well beyond algebra. The main techniques were developed by Gerhard Gentzen, Emil Post and Andrey Markov\footnote{The same one who invented Markov chains \emph{and}\/ the $n$-gram model of language, all in the same paper \cite{MarkovA:onegin}.}. Noam Chomsky refined and specialized sequent theories into grammars \cite{ChomskyN:three,ChomskyN:structures}. Formal grammars have been instrumental in specifying the  syntax of programming languages \cite{NaurP:algol}, as well as in their operational semantics \cite{PlotkinG:SOS}. The use of sequents for specifying grammars of the \emph{natural}\/ languages goes back to Sanskrit philologist Panini from 6th century~BC \cite{BloomfieldL:Panini}. Lambek's pregroups belong to that tradition. The results show that the sequent view of the spider algebras and of the Frobenius conditions is a natural extension. 

\subsection{Overview of the paper} In Sec.~\ref{Sec:prel} we set the stage for sequent algebra in the category of preorders and preordered relations, shortened to \emph{prelations}. A reader with some categorical experience will be familiar with most of the concepts introduced in this section, and may want to skip forward and come back as needed. Sec.~\ref{Sec:preg} describes the prelational monoids, one of the simplest structures of sequent algebra, and characterizes when they are representable as the ordinary preordered monoids. The structure of pregroups is also presented in this framework. Sec.~\ref{Sec:frobspid} spells out the Frobenius conditions and the  special isometry condition for prelations, and defines the spider algebras in this framework. Theorems \ref{thm:one}, \ref{thm:two} and \ref{thm:three} are stated and proved in sections \ref{Sec:thm},
 \ref{Sec:residuated}, and \ref{Sec:unions}, respectively. Sec.~\ref{Sec:outro} discusses the repercussions of the results, in particular with respect to the goals and the limitations of pregroup grammars \cite[Ch.~28]{LambekJ:word-sentence}.

%
%

\section{Preorders and prelations}\label{Sec:prel}

A \emph{preorder}\/ is a set $A$ with a transitive, reflexive relation $\der A \subseteq  A\times A$, which means that
\beq \derrr x A x \qquad \qquad \mbox{and} \qquad \qquad \derrr x  A y\  \wedge\  \derrr y A z\ \  \implies\ \  \derrr x  A z \eeq
hold for all $x,y,z\in A$. When the underlying set $A$ is clear or irrelevant, we write $\derr x y$. The \emph{preorder equivalence}\/ is the symmetric part of a preorder, defined by
\bea\label{eq:eq}
\derreq x y & \iff & \derr x y\wedge\derr y x
\eea
The quotient of $A$ modulo the equivalence relation $\dereq A$ is the largest poset (partially ordered set) with a monotone map from $A$. Since this map is a surjective order embedding, every preorder is equivalent\footnote{Order embeddings are a special case of what category theorists call full and faithful functors. Surjective order embeddings are thus a special case of categorical equivalences.} to a poset.

\smallskip
A \emph{prelation}\/ $\Phi:A\prel B$ between preorders $A$ and $B$ is a relation $\der \Phi \subseteq A\times B$ which is upper-closed in $A$ and lower-closed in $B$, i.e.
\bea\label{eq:prel}
\derrr x A {x'}\ \wedge\  \derrr{x'} \Phi {y'}\ \wedge\  \derrr {y'}  B y &\implies & \derrr x \Phi y
\eea
holds for all $x,x' \in A$ and $y,y' \in B$. Given a prelation $\Psi :B\prel  C$, the composite $\scomp \Phi \Psi \colon A\prel  C$ is defined
\bea
\derrr x {(\Phi\, ; \Psi)} z & \iff & \exists y.\ \derrr x \Phi y\  \wedge\ \derrr y\Psi z
\eea
The transitivity makes the preordering $\der A \subseteq A\times A$ into the identity prelation on $A$. The category of preorders and prelations is denoted by $\PRel$. 

\paragraph{\sf Examples.} Given a set $A$, any set of sequents $\{{a_{i}} \leftarrow{a'_{i}}\}_{i\in J}\subseteq A\times A$ determines a preorder on $A$ as its transitive reflexive closure. Given preorders $A$ and $B$, any set of sequents $\{{a_{i}} \leftarrow{b_{i}}\}_{i\in K}\subseteq A\times B$ determines a prelation $A\prel B$ as its transitive closure under $\der A$ on the left and $\der B$ on the right.

\subsection{Monoidal structure}\label{Sec:Mon}
The main tensor products of $\PRel$ are familiar from their restrictions to $\PRel$'s smaller cousin $\Rel$, the category of sets and relations. For any pair of preorders $A$ and $B$, 
\begin{itemize}
\item the disjoint union $A+B$ with the preorder
\bea
\derrr x{A+B} y & \iff & \derrr x A y \vee \derrr x B y
\eea
is a biproduct in $\PRel$, with the empty set $\emptyset$ as the zero;
\item the cartesian product $A\times B$ with the preorder written in the form
\bea
\derrr{x, y} {AB} {x', y'} & \iff& \derrr x A {x'}\wedge \derrr y B {y'}
\eea
is a symmetric monoidal product in $\PRel$, with the singleton $\OOne=\{\emptyset\}$ as the unit object. 
\end{itemize}
The diagonals $\dgnl:A\prel  A\times A$ and the projections $\dproj :A\prel  \OOne$ are defined by
\beq\label{eq:delta}
\derrr x \dgnl {y,z} \ \iff \ \derrr {x,x} {AA} {y,z} 
\qquad \qquad \mbox{and}\qquad\qquad \derrr x \dproj \emptyset \mbox{ for all } x
\eeq
Just like in the category $\Rel$ of sets and relations, the diagonals and the projections do not form natural transformations: the former commutes only with the single-valued prelations, the latter only with the total ones. $(\PRel, \times , \OOne)$ is therefore not a cartesian structure, just symmetric monoidal. $(\PRel, +, \emptyset)$ is a bicartesian structure.

\subsection{Completions} \label{Sec:Completions}
Any preorder $X$ embeds into the complete lattices of its lower sets and upper sets, respectively, with the opposite inclusion orderings:
\begin{align}
\Do X & =  \{ L\subseteq X\ |\ x\in L\wedge \derrr x X {x'} \implies x'\in L\} && \mbox{ with }& \derrr L{\Do X} {L'} & \iff L \supseteq L' \\
\Up X & =  \{ V\subseteq X\ |\ \derrr x X  {x'} \wedge x'\in V \implies x \in V\}  &&\mbox{ with }& \derrr U{\Up X} {U'} & \iff U \subseteq U'
\end{align}
The embeddings map each $x\in X$ into the principal lower set $\downarrow x = \{y|\derrr x X {y}\}\in \Do X$ and the principal upper set $\uparrow x = \{y|\derrr y X x\}\in \Up X$. It is easy to see that $\Do X$ is generated by the unions of the principal lower sets as the suprema, and that $\Up X$ is generated by the unions of the principal upper sets as the infima. They are thus $X$'s supremum and infimum completions, respectively.

\subsection{Dualities}\label{Sec:duality}
$\PRel$ supports two dualities:
\bea
O, \ddag \ \colon\ \PRel^{op} & \to & \PRel
\eea
where $\PRel^{op}$ is the opposite category of $\PRel$, i.e. $\PRel^{op}(A, B) = \PRel(B,A)$. It is convenient to write $A^o$ instead of $O(A)$ and $A^\ddag$ instead of $\ddag(X)$.  The object part of $O$ sends $A$ to the opposite preorder $A^o$, i.e. $\derrr x {A^o} {x'} \iff  \derrr {x'} A x$. The object part of $\ddag$ is the identity, i.e. $A^\ddag = A$. The arrow parts map a prelation $\Phi :A \prel  B$ to prelations in the form $\Phi^o : B^o \prel  A^o$ and $\Phi^\ddag: B\prel  A$, defined
\beq\label{eq:duals}
\derrr y{\Phi^o} x \iff \derrr x\Phi y \qquad\qquad \derrr y {\Phi^\ddag} x \iff \bigg(\forall uv.\derrr u \Phi v \Rightarrow \derrr y B v \wedge \derrr u A x  \bigg)
\eeq
To understand $\ddag \ \colon\ \PRel^{op} \to \PRel$, consider the special case when $A = \OOne$. A  prelation $\Phi:\OOne \prel B$ can then be viewed as a lower set $\Phi\in \Do B$, and it is easy to see that $\Phi^\ddag: B\prel \OOne$, defined as in \eqref{eq:duals}, is the upper set $\Phi^\ddag \in \Up B$ of all upper bounds of the lower set $\Phi$. When $B=\OOne$, then $\Psi : A\prel \OOne$ is an upper set $\Psi \in \Up A$ and $\Psi^\ddag:\OOne \prel A$ in \eqref{eq:duals} corresponds to the lower set $\Psi^\ddag \in \Do A$ of $\Psi$'s lower bounds. Iterating the dagger induces the closure operators $\ddag\ddag: \Do B \to \Do B$ where $\Phi = \Phi^{\ddag\ddag}$ if and only if the lower set $\Phi$ contains all lower bounds of the set of its upper bounds. The closure operators $\ddag\ddag: \Up A \to \Up A$ fix the upper sets $\Psi = \Psi^{\ddag\ddag}$ that contain all upper bounds of the sets of their lower bounds. If a general prelation $\Phi:A\prel B$ is viewed as a lower set $\Phi\in \Do(A^o \times B)$, then $\Phi^\ddag$ is the set of $\Phi$'s upper bounds. A set of upper bounds is, of course, an element of $\Up(A^o\times B)$, but if it is viewed as the lower set $\Phi^\ddag \in \Do(B^o\times A)\cong \Up(A^o\times B)$, then it easier to see that it is a prelation $\Phi^\ddag:B\prel A$. Checking that $\Phi\subseteq \Phi^{\ddag\ddag}$ and $\Phi^\ddag = \Phi^{\ddag\ddag\ddag}$ always hold is routine, and $\Phi = \Phi^{oo}$ is obvious. The two dualities coincide just on the preorders that happen to be equivalence relations.  The duality $O$ gives rise to the internal adjoints in $\PRel$ on the level of objects, $\ddag$ on the level of arrows. The former adjoints make $\PRel$ into a compact category, whereas the latter provide an internal characterization of the monotone \emph{maps}\/ between preorders.

\subsection{Compact structure}
On the level of the objects of $\PRel$, the fact that every preorder $B$ has a right adjoint\footnote{This adjunction is with respect to the view of the monoidal category $\PRel$ as a bicategory with a single 0-cell, which makes it its objects into 1-cells, and its morphisms into 2-cells.} $\op B$ in $\PRel$ is realized by the bijections
\bea
\PRel(A\times B, C) & \cong & \PRel(A, \op B \times C)
\eea
natural in $A$ and $C$. The adjunction unit $ \OOne\tto{\, \eta\, } \op B \times B$ arises on the right, corresponding to $\id:B\prel  B$ on left, whereas the counit $B\times \op B\tto{\,  \varepsilon\, } \OOne$ arises on the left corresponding to $\id:B\prel  B=B^{oo}$ on the right. This makes $\PRel$ into a compact category. 

\subsection{Maps}\label{Sec:maps}
On the level of morphisms, the adjunction conditions  are
\begin{multline}\label{eq:tot}
\id_A \subseteq \scomp \Phi{\Phi^\ddag}, \mbox{ which means}\\ \derrr x A {x'} \ \implies \ \left[\exists y.\ \derrr x\Phi y\  \wedge\ \bigg(\forall uv.\derrr u \Phi v \Rightarrow \derrr y B v \wedge \derrr u A {x'}\bigg)\right]
\end{multline}
\begin{multline}
\scomp{\Phi^\ddag}\Phi  \subseteq \id_B, \mbox{ which means}\\ \left[\exists x.\ \bigg(\forall uv.\derrr u \Phi v \Rightarrow \derrr y B v \wedge \derrr u A x\bigg) \wedge \derrr x\Phi {y'}\right]\ \implies \ \derrr y B {y'}\label{eq:sv}
\end{multline}
A prelation $\Phi$  satisfying (\ref{eq:tot}--\ref{eq:sv}) is called a \emph{map}. Condition \eqref{eq:tot} is equivalent to the claim that for every $x\in A$ there is some $y\in B$ with $\derrr x \Phi y$; condition \eqref{eq:sv} to the claim that $\derrr x \Phi y$ and $\derrr x \Phi {y'}$ together imply $\derrreq y B {y'}$. In other words, \eqref{eq:tot} says that $\Phi$ is total, \eqref{eq:sv} that it is single-valued, up to the preorder equivalence. They hold together if and only if for every $a\in A$ there is $\Phi(a)\in B$, unique up to the preorder equivalence, such that
\bea\label{eq:repres}
\derrr a \Phi b & \iff & \derrr {\Phi(a)} B b
\eea
Condition \eqref{eq:repres} means that the lower set of $B$-elements $y$ that are $\Phi$-related to $a$ is \emph{representable} by $\Phi(a)$. It is easy to see that any $y\in B$ also satisfies \eqref{eq:repres} if and only if  $\derrreq {y} B {\Phi(a)}$. When $B$ is a partial order, the preorder equivalence boils down to the equality, and a map $\Phi:A\prel B$ boils down to a monotone function $\Phi:A\to B$ with \eqref{eq:repres}. Maps between general preorders are still equivalent to monotone functions, but the representation is up to the preorder equivalence, and the representing functions need to be chosen. 

\subsection{Representations over completions} 
By \eqref{eq:prel} a prelation $\Phi:A\prel B$ can be viewed as either of the functions
\begin{align*}
\Phi_\bullet \colon A & \to \Do B & \Phi^\bullet \colon B & \to \Up A\\
a & \mapsto \left\{y|\derrr a \Phi y\right\} & b & \mapsto \left\{x|\derrr x \Phi b\right\}
\end{align*}
and extended to 
\begin{align*}
\Phi_\ast \colon \Up A & \to   \Do B & \Phi^\ast \colon \Do B & \to \Up A\\
V & \mapsto  \bigcap_{a\in V} \Phi_\bullet(a) 
& 
L & \mapsto \bigcap_{b\in L} \Phi^\bullet(b) 
\end{align*}
which form a Galois connection
\bea\label{eq:galoisast}
\Phi^\ast(L) \supseteq V & \iff & L\subseteq \Phi_\ast(V)
\eea
Since $\Up B^o = \Do B$ and $\Do A^o = \Up A$, the extensions of the dual $\Phi^o:B^o\prel A^o$ are simply $\Phi^o_\ast = \Phi^\ast$ and $\Phi^{o\ast} = \Phi_\ast$. On the other hand, the extensions of  the dual $\Phi^\ddag:B\prel A$ are in the form
\begin{align*}
\Phi_\# \colon \Up B & \to   \Do A & \Phi^\# \colon \Do A & \to \Up B\\
W & \mapsto  \bigcap_{b\in W} \left\{x|\derrr b {\Phi^\ddag} x\right\}&   
K & \mapsto   \bigcap_{a\in K} \left\{y|\derrr y {\Phi^\ddag} a\right\}
\end{align*}
which again form a Galois connection
\bea\label{eq:galoisdual}
\Phi^\#(K) \supseteq W & \iff & K\subseteq \Phi_\#(W)
\eea
The composites of any pair of maps that form a Galois connection are closure operators, and the induced lattices of closed sets carry a reduced Galois connection. This is the \emph{nucleus}\/ of the original Galois connection \cite{PavlovicD:ICFCA12,PavlovicD:Samson13,PavlovicD:nucleus,WillertonS:nucleus}. A Galois connection is \emph{nuclear}\/ when it is its own nucleus. For the Galois connections in \eqref{eq:galoisast} and \eqref{eq:galoisdual}, the lattices of closed sets are isomorphic. A prelation satisfies $\Phi = \Phi^{\ddag\ddag}$ precisely when the induced Galois connections are nuclear. A representable prelation, i.e. a monotone map $\Phi: A\to B$ is always nuclear, i.e. satisfies $\Phi = \Phi^{\ddag\ddag}$. This follows from the observation that the extensions $\Phi_\ast$ and $\Phi^\#$ factor through $\Phi$ as its extensions 
\beq
\Phi_\ast(V)\ =\ \bigcap_{a\in V} \downarrow\! \Phi(a)\qquad \qquad \qquad \qquad \Phi^\#(K)\ =\ \bigcap_{a\in K} \uparrow\! \Phi(a)
\eeq
where $\downarrow\colon B\to \Do B$ and $\uparrow\colon B\to \Up B$ are the supremum and the infimum completion embeddings, respectively, mapping $b\in B$ to the lower set $\downarrow\! b = \{y|\derrr b B y\}$ and to the upper set $\uparrow\! b = \{y|\derrr y B b\}$. For a map  $\Phi: A\to B$, the representation from \eqref{eq:repres} thus extends to
\bea\label{eq:represdag}
\derrr b {\Phi^\ddag} a & \iff & \derrr b B {\Phi(a)}
\eea

\subsection{Diagonals and projections are maps} The diagonal prelation $\dgnl:A\prel  A\times A$ and the projection $\dproj :A\prel  \OOne$ are represented by $\dgnl(x) = <x,x> \in A\times A$ and $\dproj(x) = \emptyset$. Instantiating \eqref{eq:duals}, their duals  $\dgnl^\ddag : A \times A \prel  A$ and $\dproj^\ddag: \OOne \prel  \op A$  are defined
\bear
\derrr {x,y} {\dgnl^\ddag} z  & \iff & \bigg(\forall uvw. \derrr {u,u} {AA} {v,w}\implies \derrr{ x, y, u}{AAA}{v,w,z}\bigg)\hspace{3em}  \mbox{and}\\
\derrr \emptyset {\dproj^\ddag} x & \iff & \bigg(\forall u. \derrr u \dproj \emptyset \implies \derrr u A x\bigg)
\eear  
Simplifying, and writing $\cgnl = \dgnl^\ddag$ and $\cproj = \dproj^\ddag$ we have 
\beq
\derrr {x,y} {\cgnl} z  \  \iff \  \derrr {x, y} {AA} {z,z}\qquad\qquad \mbox{ and } \qquad \qquad 
\derrr \emptyset {\cproj} x \ \iff  \bigg(\forall u. \derrr u A x\bigg)
\eeq  
It is now easy to prove
\begin{align}
\id_A  &\subseteq \scomp {\dgnl} {\cgnl} & \scomp{\cgnl}{\dgnl}  & \subseteq \id_{AA} \\
\id_A &\subseteq \scomp \dproj{\cproj} & \scomp{\cproj}\dproj  & \subseteq \id_{\OOne}
\end{align}
which means that $\dgnl$ and $\dproj$ are maps. In string diagrams, they are usually drawn as black-dot comonoids and monoids:
\beq
\begin{split}
\newcommand{\diagonal}{\dgnl}
\newcommand{\projn}{\dproj}
\newcommand{\codiagonal}{\cgnl}
\newcommand{\coprojn}{\cproj}
\def\JPicScale{.9}
\ifx\JPicScale\undefined\def\JPicScale{1}\fi
\psset{unit=\JPicScale mm}
\psset{linewidth=0.3,dotsep=1,hatchwidth=0.3,hatchsep=1.5,shadowsize=1,dimen=middle}
\psset{dotsize=0.7 2.5,dotscale=1 1,fillcolor=black}
\psset{arrowsize=1 2,arrowlength=1,arrowinset=0.25,tbarsize=0.7 5,bracketlength=0.15,rbracketlength=0.15}
\begin{pspicture}(0,0)(81.09,15)
\psline(0,15)(0,12.5)
\psline(10,15)(10,12.5)
\psline(0,12.5)(5,7.5)
\psline(5,7.5)(10,12.5)
\psline(5,0)(5,7.5)
\psline(60,2.5)(60,0)
\psline(70,2.5)(70,0)
\psline(65,7.5)(70,2.5)
\psline(60,2.5)(65,7.5)
\psline(65,7.5)(65,15)
\psline(80,7.5)(80,12.5)
\rput{0}(18.75,7.5){\psellipse[fillstyle=solid](0,0)(1.09,-1.09)}
\psline(18.75,0)(18.75,7.5)
\rput(5,12.5){$\diagonal$}
\rput(65,1.25){$\codiagonal$}
\rput(18.75,12.5){$\projn$}
\rput(80,1.25){$\coprojn$}
\rput{0}(65,7.5){\psellipse[fillstyle=solid](0,0)(1.09,-1.09)}
\rput{0}(80,7.5){\psellipse[fillstyle=solid](0,0)(1.09,-1.09)}
\rput{0}(5,7.5){\psellipse[fillstyle=solid](0,0)(1.09,-1.09)}
\end{pspicture}

\end{split}
\eeq
It is interesting to note that $\cgnl: A\times  A\prel A$ is a map if and only if the preorder $A$ has finite nonempty infima, whereas $\dgnl^o : A^o\times A^o \prel A^o$ is a map if and only if $A$ has finite nonempty suprema. The prelation $\cproj \colon \OOne\prel A$ is a map just when $A$ has a bottom element. 

%

\section{Pregroups}\label{Sec:preg}

\subsection{Preordered monoids}\label{Sec:mon}
A monoid in $\PRel$ is a triple $(A,\spmnd,\spunt)$ where $A$ is a preorder, $\spmnd:A\times A\prel A$ a ternary prelation, i.e. a subset of $A\times A\times A$ which is upper-closed in the first two components and lower-closed in the third one, $\spunt:\OOne \prel A$ is a lower set in $A$. They are required to satisfy the usual monoid axioms, the associativity and the unit conditions, which correspond in $\PRel$ to the following sequents:
 
 \medskip
\beq\label{eq:monoid}
\begin{split}
\newcommand{\eqone}{\scriptstyle \exists u.\derrr {x,y}\spmnd u \wedge \derrr {u, z} \spmnd w} 
\newcommand{\eqtwo}{\scriptstyle \exists v. \derrr {x,v}\spmnd w\wedge \derrr{y, z} \spmnd v}
\newcommand{\eqthree}{\scriptstyle \derrr {\spunt,x} \spmnd y}
\newcommand{\eqfour}{\scriptstyle \derrr{x,\spunt} \spmnd y}
\newcommand{\eqfive}{\scriptstyle \derrr x A y}
\newcommand{\eex}{\scriptstyle x}
\newcommand{\eey}{\scriptstyle y}
\newcommand{\eez}{\scriptstyle z}
\newcommand{\eeu}{\scriptstyle u}
\newcommand{\eev}{\scriptstyle v}
\newcommand{\eew}{\scriptstyle w}
\newcommand{\pregr}{\scriptstyle \spmnd}
\newcommand{\prunt}{\scriptstyle \spunt}
\def\JPicScale{1.079}
\ifx\JPicScale\undefined\def\JPicScale{1}\fi
\psset{unit=\JPicScale mm}
\psset{linewidth=0.3,dotsep=1,hatchwidth=0.3,hatchsep=1.5,shadowsize=1,dimen=middle}
\psset{dotsize=0.7 2.5,dotscale=1 1,fillcolor=black}
\psset{arrowsize=1 2,arrowlength=1,arrowinset=0.25,tbarsize=0.7 5,bracketlength=0.15,rbracketlength=0.15}
\begin{pspicture}(0,0)(121.25,27.5)
\rput[r](25.63,27.5){$\eqone$}
\rput[l](31.87,27.5){$\eqtwo$}
\rput(28.75,10){\EQLS}
\psline(0,5)(0,0)
\psline(10,5)(10,0)
\psline(15,10)(15,0)
\psline(6.25,8.75)(10,5)
\psline(6.25,11.25)(8.75,13.75)
\psline(11.25,13.75)(15,10)
\psline(10,20)(10,16.25)
\psline(60.63,5)(60.63,0)
\psline(50.63,5)(50.63,0)
\psline(45.63,10)(45.62,0)
\psline(54.38,8.75)(50.63,5)
\psline(45.63,10)(49.38,13.75)
\psline(51.88,13.75)(54.38,11.25)
\psline(50.63,20)(50.62,16.25)
\rput(91.25,10){\EQLS}
\psline(86.25,10)(86.25,0)
\psline(77.5,11.25)(80,13.75)
\psline(82.5,13.75)(86.25,10)
\psline(81.25,20)(81.25,16.25)
\psline(120,20)(120,0)
\psline(96.25,10)(96.25,0)
\psline(96.25,10)(100,13.75)
\psline(102.5,13.75)(105,11.25)
\psline(101.25,20)(101.25,16.25)
\rput(111.25,10){\EQLS}
\rput[r](88.13,27.5){$\eqthree$}
\rput(101.25,27.5){$\eqfour$}
\rput[l](114.37,27.5){$\eqfive$}
\rput(91.25,27.5){\IFF}
\rput(28.75,27.5){\IFF}
\rput(111.25,27.5){\IFF}
\rput[r](-1.25,0){$\eex$}
\rput[r](8.75,0){$\eey$}
\rput[r](13.75,0){$\eez$}
\rput[br](6.87,13.12){$\eeu$}
\rput[bl](53.75,13.12){$\eev$}
\rput[l](51.87,19.38){$\eew$}
\rput[l](11.25,19.38){$\eew$}
\rput[r](44.38,0){$\eex$}
\rput[r](85,0){$\eex$}
\rput[r](95,0){$\eex$}
\rput[r](118.75,0){$\eex$}
\rput[r](49.37,0){$\eey$}
\rput[l](82.5,20){$\eey$}
\rput[l](102.5,20){$\eey$}
\rput[l](121.25,20){$\eey$}
\rput[r](59.37,0){$\eez$}
\psline(56.88,8.75)(60.62,5)
\psline(0,5)(3.75,8.75)
\psline(8.75,16.25)(11.25,16.25)
\psline(8.75,13.75)(11.25,13.75)
\psline(11.25,16.25)(11.25,13.75)
\psline(8.75,16.25)(8.75,13.75)
\psline(49.38,16.25)(51.88,16.25)
\psline(49.38,13.75)(51.88,13.75)
\psline(51.88,16.25)(51.88,13.75)
\psline(49.38,16.25)(49.38,13.75)
\psline(3.75,11.25)(6.25,11.25)
\psline(3.75,8.75)(6.25,8.75)
\psline(6.25,11.25)(6.25,8.75)
\psline(3.75,11.25)(3.75,8.75)
\psline(54.38,11.25)(56.88,11.25)
\psline(54.38,8.75)(56.88,8.75)
\psline(56.88,11.25)(56.88,8.75)
\psline(54.38,11.25)(54.38,8.75)
\psline(75,11.25)(77.5,11.25)
\psline(75,8.75)(77.5,8.75)
\psline(77.5,11.25)(77.5,8.75)
\psline(75,11.25)(75,8.75)
\psline(80,16.25)(82.5,16.25)
\psline(80,13.75)(82.5,13.75)
\psline(82.5,16.25)(82.5,13.75)
\psline(80,16.25)(80,13.75)
\psline(100,16.25)(102.5,16.25)
\psline(100,13.75)(102.5,13.75)
\psline(102.5,16.25)(102.5,13.75)
\psline(100,16.25)(100,13.75)
\psline(105,11.25)(107.5,11.25)
\psline(105,8.75)(107.5,8.75)
\psline(107.5,11.25)(107.5,8.75)
\psline(105,11.25)(105,8.75)
\rput(10,15){$\pregr$}
\rput(5,10){$\pregr$}
\rput(50.62,15){$\pregr$}
\rput(55.62,10){$\pregr$}
\rput(81.25,15){$\pregr$}
\rput(101.25,15){$\pregr$}
\rput(76.25,10){$\prunt$}
\rput(106.25,10){$\prunt$}
\end{pspicture}

\end{split}
\eeq

\paragraph{\sf Examples.} Let $\MMM$ be the set of finite multisets of natural numbers, which can be viewed as functions $\NNn\to \NNn$ that are $0$ everywhere except at a finite number of points. Let the preordering be
\bear
\derrr x A y & \iff & \Sigma x \leq \Sigma y
\eear
where $\Sigma x$ abbreviates the number $\Sigma_{n\in \NNn}nx_{n}$. A prelational monoid can then be defined by
\bear
\derrr {x, y} \spmnd z & \iff &  \derrr {x+y} A z
\eear
where $x+y$ is the multiset of $(x+y)_{n} = x_{n}+y_{n}$. This prelational monoid is thus represented by the operation $+$ on $\MMM$. For a non-representable an  non-commutative example, the reader may want to spell out the monoid over the set $\LLL$ of all words in some alphabet preordered by the prelation $\derr x y$ saying that the word $x$ occurs within the word $y$, not necessarily as a prefix or contiguously. An interesting prelational monoid then admits $\derrr{x,y}\spmnd z$ whenever $z$ contains some shuffle of $x$ and $y$. This is obviously not commutative and  in the next section we explain what does it mean that it is not representable.

\subsection{Representable, strict, pointed monoids}\label{Sec:repres}
Lambek's pregroups \cite{LambekJ:pregroups,LambekJ:word-sentence} are monoids over preorders, with additional features and with special requirements. The additional features are the adjunction operations, discussed in Sec.~\ref{Sec:adj}. The first special requirement is that Lambek defined pregroups as \emph{partially ordered}\/ monoids, and not just preordered as above. A preordering is a partial ordering when $\derr x y$ and $\derr y x$ imply $x=y$. Since any preorder is order-isomorphic with its quotient\footnote{In some situations, splitting the quotient map to embed the quotient back into the original preorder requires the axiom of choice. Such situations do not arise in the present paper.} poset along the preorder equivalence \eqref{eq:eq}, the choice between posets and preorders is a matter of notational conveniences. Posets are often more concise, but maintaining the antisymmetry in sequent algebra requires factoring out sequent loops at every step, so we leave pregroups preordered. 

\smallskip 
The second special requirement is that pregroups are \emph{representable}\/ as monoids. In other words, the underlying monoid structure $(A,\mnd,\unt)$ of a pregroup in $\PRel$ is represented by monotone maps, the prelations that are total and single-valued, as defined in Sec.~\ref{Sec:maps}. A monoid $(A,\mnd,\unt)$ in $\PRel$ is said to be
\begin{itemize}
\item \emph{strict}\/ if $\mnd$ is a map, i.e.  
\beq\label{eq:monmap}
\begin{split}
\newcommand{\pregr}{\scriptstyle \therefore}
\newcommand{\copregr}{\scriptstyle \because}
\def\JPicScale{1.1}
\ifx\JPicScale\undefined\def\JPicScale{1}\fi
\psset{unit=\JPicScale mm}
\psset{linewidth=0.3,dotsep=1,hatchwidth=0.3,hatchsep=1.5,shadowsize=1,dimen=middle}
\psset{dotsize=0.7 2.5,dotscale=1 1,fillcolor=black}
\psset{arrowsize=1 2,arrowlength=1,arrowinset=0.25,tbarsize=0.7 5,bracketlength=0.15,rbracketlength=0.15}
\begin{pspicture}(0,0)(80,27.5)
\psline(23.75,11.25)(23.75,8.75)
\psline(26.25,8.75)(30,5)
\psline(23.75,8.75)(20,5)
\psline(1.25,27.5)(1.25,0)
\psline(25,16.25)(25,11.25)
\psline(23.75,8.75)(26.25,8.75)
\psline(23.75,11.25)(26.25,11.25)
\psline(26.25,8.75)(26.25,11.25)
\psline(8.75,27.5)(8.75,0)
\psline(20,5)(20,0)
\psline(30,5)(30,0)
\psline(20,22.5)(23.75,18.75)
\psline(30,22.5)(26.25,18.75)
\psline(20,27.5)(20,22.5)
\psline(30,27.5)(30,22.5)
\psline(66.25,21.25)(70,17.5)
\psline(63.75,21.25)(60,17.5)
\psline(65,3.75)(65,0)
\psline(60,17.5)(60,10)
\psline(70,17.5)(70,10)
\psline(60,10)(63.75,6.25)
\psline(70,10)(66.25,6.25)
\psline(65,27.5)(65,23.75)
\psline(80,27.5)(80,0)
\rput(75,13.75){\SUB}
\rput(15,13.75){\SUB}
\psline(23.75,18.75)(23.75,16.25)
\psline(23.75,16.25)(26.25,16.25)
\psline(23.75,18.75)(26.25,18.75)
\psline(26.25,16.25)(26.25,18.75)
\psline(63.75,23.75)(63.75,21.25)
\psline(63.75,21.25)(66.25,21.25)
\psline(63.75,23.75)(66.25,23.75)
\psline(66.25,21.25)(66.25,23.75)
\psline(63.75,6.25)(63.75,3.75)
\psline(63.75,3.75)(66.25,3.75)
\psline(63.75,6.25)(66.25,6.25)
\psline(66.25,3.75)(66.25,6.25)
\rput(25,10){$\pregr$}
\rput(25,17.5){$\copregr$}
\rput(65,5){$\copregr$}
\rput(65,22.5){$\pregr$}
\end{pspicture}

\end{split}
\eeq
\item \emph{pointed}\/ if $\unt$ is a map, i.e.
\beq\label{eq:untmap}
\begin{split}
\newcommand{\pregr}{\scriptstyle \unt}
\newcommand{\copregr}{\scriptstyle \mathbin{\rotatebox[origin=c]{180}{$\scriptstyle \unt$}}}
\newcommand{\zzzz}{\id_\OOne}
\def\JPicScale{1.1}
\ifx\JPicScale\undefined\def\JPicScale{1}\fi
\psset{unit=\JPicScale mm}
\psset{linewidth=0.3,dotsep=1,hatchwidth=0.3,hatchsep=1.5,shadowsize=1,dimen=middle}
\psset{dotsize=0.7 2.5,dotscale=1 1,fillcolor=black}
\psset{arrowsize=1 2,arrowlength=1,arrowinset=0.25,tbarsize=0.7 5,bracketlength=0.15,rbracketlength=0.15}
\begin{pspicture}(0,0)(80,17.5)
\psline(21.25,6.25)(21.25,3.75)
\psline(22.5,11.25)(22.5,6.25)
\psline(21.25,3.75)(23.75,3.75)
\psline(21.25,6.25)(23.75,6.25)
\psline(23.75,3.75)(23.75,6.25)
\psline(68.75,3.75)(68.75,0)
\psline(68.75,17.5)(68.75,13.75)
\psline(80,17.5)(80,0)
\rput(75,8.75){\SUB}
\rput(15,8.75){\SUB}
\psline(21.25,13.75)(21.25,11.25)
\psline(21.25,11.25)(23.75,11.25)
\psline(21.25,13.75)(23.75,13.75)
\psline(23.75,11.25)(23.75,13.75)
\psline(67.5,13.75)(67.5,11.25)
\psline(67.5,11.25)(70,11.25)
\psline(67.5,13.75)(70,13.75)
\psline(70,11.25)(70,13.75)
\psline(67.5,6.25)(67.5,3.75)
\psline(67.5,3.75)(70,3.75)
\psline(67.5,6.25)(70,6.25)
\psline(70,3.75)(70,6.25)
\rput(22.5,5){$\pregr$}
\rput(22.5,12.5){$\copregr$}
\rput(68.75,5){$\copregr$}
\rput(68.75,12.5){$\pregr$}
\rput(8.75,8.75){$\zzzz$}
\end{pspicture}

\end{split}
\eeq
\item \emph{representable}\/ if it is both strict and pointed.
\end{itemize}
The duals $\cmn = \mnd^\ddag$ and $\cun =\unt^{\ddag}$ are drawn as the vertical flips of the diagrams for $\mnd$ and $\unt$. By \eqref{eq:repres} and \eqref{eq:represdag}, the strictness of $\mnd:A\times A\prel A$ implies that for every $x, y\in A$, there is a representative $\mult x y\in A$ such that
\beq\label{eq:represpreg}
\derrr {x,y}\mnd u \  \iff \  \derrr{\mult x y} A u\qquad \qquad \qquad\qquad \derrr u\cmn {x,y} \  \iff \  \derrr u A {\mult x y}
\eeq
By the same token, the unit $\unt:\OOne \prel A$ is pointed when it has a representative, also denoted $\unt\in A$ by abuse of notation, such that 
\beq\label{eq:pointed}
\derrr \emptyset \unt u \  \iff \  \derrr \unt A u\qquad \qquad \qquad\qquad \derrr u\cun \emptyset \  \iff \  \derrr u A \unt
\eeq
In other words, while a general prelational unit corresponds to a lower set $\unt\in \Do A$, a pointed unit is a principal lower set $\downarrow \unt$, and the corresponding counit $\cun \in \Up A$ is the upper set $\uparrow \unt$.
Note that a prelation $\OOne\prel A$ can fail to satisfy the left-hand condition in \eqref{eq:untmap} only if it is empty. A prelational monoid on $A$ can have an empty unit only if $A$ itself is empty. On the other hand, the right-hand requirement for a pointed monoid in \eqref{eq:untmap} implies the right-hand condition for strictness in \eqref{eq:monmap}:
\beq\label{eq:monun}
\begin{split}
\newcommand{\pregr}{\scriptstyle \therefore}
\newcommand{\copregr}{\scriptstyle \because}
\newcommand{\prunt}{\scriptstyle \unt}
\newcommand{\prcun}{\scriptstyle \mathbin{\rotatebox[origin=c]{180}{$\scriptstyle \unt$}}}
\def\JPicScale{1.1}
\ifx\JPicScale\undefined\def\JPicScale{1}\fi
\psset{unit=\JPicScale mm}
\psset{linewidth=0.3,dotsep=1,hatchwidth=0.3,hatchsep=1.5,shadowsize=1,dimen=middle}
\psset{dotsize=0.7 2.5,dotscale=1 1,fillcolor=black}
\psset{arrowsize=1 2,arrowlength=1,arrowinset=0.25,tbarsize=0.7 5,bracketlength=0.15,rbracketlength=0.15}
\begin{pspicture}(0,0)(42.5,27.5)
\psline(6.25,21.25)(10,17.5)
\psline(3.75,21.25)(0,17.5)
\psline(5,3.75)(5,0)
\psline(0,17.5)(0,10)
\psline(10,17.5)(10,10)
\psline(0,10)(3.75,6.25)
\psline(10,10)(6.25,6.25)
\psline(5,27.5)(5,23.75)
\psline(42.5,27.5)(42.5,0)
\psline(3.75,23.75)(3.75,21.25)
\psline(3.75,21.25)(6.25,21.25)
\psline(3.75,23.75)(6.25,23.75)
\psline(6.25,21.25)(6.25,23.75)
\psline(3.75,6.25)(3.75,3.75)
\psline(3.75,3.75)(6.25,3.75)
\psline(3.75,6.25)(6.25,6.25)
\psline(6.25,3.75)(6.25,6.25)
\rput(5,5){$\copregr$}
\rput(5,22.5){$\pregr$}
\rput(15,13.75){\SUP}
\psline(28.75,21.25)(32.5,17.5)
\psline(26.25,21.25)(23.75,18.75)
\psline(27.5,3.75)(27.5,0)
\psline(32.5,17.5)(32.5,10)
\psline(23.75,8.75)(26.25,6.25)
\psline(32.5,10)(28.75,6.25)
\psline(27.5,27.5)(27.5,23.75)
\psline(26.25,23.75)(26.25,21.25)
\psline(26.25,21.25)(28.75,21.25)
\psline(26.25,23.75)(28.75,23.75)
\psline(28.75,21.25)(28.75,23.75)
\psline(26.25,6.25)(26.25,3.75)
\psline(26.25,3.75)(28.75,3.75)
\psline(26.25,6.25)(28.75,6.25)
\psline(28.75,3.75)(28.75,6.25)
\rput(27.5,5){$\copregr$}
\rput(27.5,22.5){$\pregr$}
\psline(23.75,18.75)(23.75,16.25)
\psline(23.75,11.25)(23.75,8.75)
\psline(21.25,11.25)(21.25,8.75)
\psline(21.25,18.75)(21.25,16.25)
\psline(21.25,11.25)(23.75,11.25)
\psline(21.25,8.75)(23.75,8.75)
\psline(21.25,16.25)(23.75,16.25)
\psline(21.25,18.75)(23.75,18.75)
\rput(22.5,17.5){$\prunt$}
\rput(22.5,10){$\prcun$}
\rput(37.5,13.75){\EQLS}
\end{pspicture}

\end{split}
\eeq
Putting together \eqref{eq:monun} with the second part of strictness yields the equality $\scomp \cmn \mnd = \id_A$, i.e.
\bea\label{eq:totinj}
\exists uv.\ \ \derrr x \cmn {u,v} \wedge \derrr {u,v} \mnd y & \iff & \derrr x A {y}
\eea
for all $x,y\in A$. In words, \eqref{eq:totinj} says that $\cmn: A\to A\times A$ is total and injective, and that $\mnd:A\times A\to A$ is single-valued and surjective.

\subsection{Adjunctions in monoids}\label{Sec:adj}
The crucial structure which makes a preordered monoid into a pregroup are the operations $\ell, r:A\to A$, called the $\ell$eft and the $r$ight adjunction, which satisfy the following inclusions
\beq\label{eq:adjoints}
\begin{split}
\newcommand{\pregr}{\scriptstyle \therefore}
\newcommand{\prunt}{\scriptstyle \unt}
\newcommand{\rugt}{\scriptstyle r}
\newcommand{\lugt}{\scriptstyle \ell}
\def\JPicScale{1.1}
\ifx\JPicScale\undefined\def\JPicScale{1}\fi
\psset{unit=\JPicScale mm}
\psset{linewidth=0.3,dotsep=1,hatchwidth=0.3,hatchsep=1.5,shadowsize=1,dimen=middle}
\psset{dotsize=0.7 2.5,dotscale=1 1,fillcolor=black}
\psset{arrowsize=1 2,arrowlength=1,arrowinset=0.25,tbarsize=0.7 5,bracketlength=0.15,rbracketlength=0.15}
\begin{pspicture}(0,0)(125,27.5)
\psline(86.25,21.25)(90,17.5)
\psline(83.75,21.25)(80,17.5)
\psline(85,5)(85,0)
\psline(80,17.5)(80,10)
\psline(80,10)(85,5)
\psline(90,10)(85,5)
\psline(85,27.5)(85,23.75)
\psline(102.5,27.5)(102.5,17.5)
\psline(83.75,23.75)(83.75,21.25)
\psline(83.75,21.25)(86.25,21.25)
\psline(83.75,23.75)(86.25,23.75)
\psline(86.25,21.25)(86.25,23.75)
\rput(85,22.5){$\pregr$}
\psline(121.25,21.25)(125,17.5)
\psline(118.75,21.25)(115,17.5)
\psline(120,5)(120,0)
\psline(125,17.5)(125,10)
\psline(115,10)(120,5)
\psline(125,10)(120,5)
\psline(120,27.5)(120,23.75)
\psline(118.75,23.75)(118.75,21.25)
\psline(118.75,21.25)(121.25,21.25)
\psline(118.75,23.75)(121.25,23.75)
\psline(121.25,21.25)(121.25,23.75)
\rput(120,22.5){$\pregr$}
\psline(115,17.5)(115,15)
\psline(116.25,15)(116.25,12.5)
\psline(113.75,15)(113.75,12.5)
\psline(103.75,17.5)(103.75,15)
\psline(113.75,15)(116.25,15)
\psline(113.75,12.5)(116.25,12.5)
\psline(101.25,15)(103.75,15)
\psline(101.25,17.5)(103.75,17.5)
\rput(102.5,16.25){$\prunt$}
\psline(102.5,11.25)(102.5,0)
\psline(101.25,17.5)(101.25,15)
\psline(115,12.5)(115,10)
\psline(90,17.5)(90,15)
\psline(91.25,15)(91.25,12.5)
\psline(88.75,15)(88.75,12.5)
\psline(88.75,15)(91.25,15)
\psline(88.75,12.5)(91.25,12.5)
\psline(90,12.5)(90,10)
\rput(90,13.75){$\rugt$}
\rput(115,13.75){$\rugt$}
\rput(108.75,13.75){\SUB}
\rput(95,13.75){\SUB}
\rput{0}(102.5,11.25){\psellipse[fillstyle=solid](0,0)(1.09,-1.09)}
\rput{0}(120,5){\psellipse[fillstyle=solid](0,0)(1.09,-1.09)}
\rput{0}(85,5){\psellipse[fillstyle=solid](0,0)(1.09,-1.09)}
\psline(41.25,21.25)(45,17.5)
\psline(38.75,21.25)(35,17.5)
\psline(40,5)(40,0)
\psline(35,17.5)(35,10)
\psline(35,10)(40,5)
\psline(45,10)(40,5)
\psline(40,27.5)(40,23.75)
\psline(22.5,27.5)(22.5,17.5)
\psline(38.75,23.75)(38.75,21.25)
\psline(38.75,21.25)(41.25,21.25)
\psline(38.75,23.75)(41.25,23.75)
\psline(41.25,21.25)(41.25,23.75)
\rput(40,22.5){$\pregr$}
\psline(6.25,21.25)(10,17.5)
\psline(3.75,21.25)(0,17.5)
\psline(5,5)(5,0)
\psline(10,17.5)(10,10)
\psline(0,10)(5,5)
\psline(10,10)(5,5)
\psline(5,27.5)(5,23.75)
\psline(3.75,23.75)(3.75,21.25)
\psline(3.75,21.25)(6.25,21.25)
\psline(3.75,23.75)(6.25,23.75)
\psline(6.25,21.25)(6.25,23.75)
\rput(5,22.5){$\pregr$}
\psline(0,17.5)(0,15)
\psline(1.25,15)(1.25,12.5)
\psline(-1.25,15)(-1.25,12.5)
\psline(23.75,17.5)(23.75,15)
\psline(-1.25,15)(1.25,15)
\psline(-1.25,12.5)(1.25,12.5)
\psline(21.25,15)(23.75,15)
\psline(21.25,17.5)(23.75,17.5)
\rput(22.5,16.25){$\prunt$}
\psline(22.5,11.25)(22.5,0)
\psline(21.25,17.5)(21.25,15)
\psline(0,12.5)(0,10)
\psline(45,17.5)(45,15)
\psline(46.25,15)(46.25,12.5)
\psline(43.75,15)(43.75,12.5)
\psline(43.75,15)(46.25,15)
\psline(43.75,12.5)(46.25,12.5)
\psline(45,12.5)(45,10)
\rput(28.75,13.75){\SUB}
\rput(15,13.75){\SUB}
\rput{0}(22.5,11.25){\psellipse[fillstyle=solid](0,0)(1.09,-1.09)}
\rput{0}(5,5){\psellipse[fillstyle=solid](0,0)(1.09,-1.09)}
\rput{0}(40,5){\psellipse[fillstyle=solid](0,0)(1.09,-1.09)}
\rput(45,13.75){$\lugt$}
\rput(0,13.75){$\lugt$}
\end{pspicture}

\end{split}
\eeq
Like the monoid operation $\mnd$, the adjunction operations are assumed to be representable, i.e. for every $x\in A$ there are $x^\ell, x^r\in A$ such that
\beq \derrr x \ell y \iff \derr {x^\ell}  y \qquad \qquad \qquad\qquad  \derrr x r y \iff \derr {x^r} y\eeq
Since everything in sight  in \eqref{eq:adjoints} is thus representable, the inclusions boil down to the implications
\beq\derr {x^\ell \cdot x} u  \implies  \derr \unt u  \implies \derr {x\cdot x^\ell} u  \qquad \quad
\derr {x\cdot x^r} v  \implies  \derr \unt v  \implies \derr {x^r\cdot x} v\eeq
Substituting $x^\ell \cdot x$ for $u$, the first implication gives $\derr \unt {x^\ell \cdot x}$. Substituting $\unt$, the second implication gives $\derr {x\cdot x^\ell} \unt$. Proceeding with the third and the fourth implication in a similar way, and putting it all together gives the full set of adjunction sequents
\beq\label{eq:adj} {x x^\ell} \leftarrow\, \unt \leftarrow {x^\ell  x}  \qquad \qquad \qquad \qquad
{x^r  x} \leftarrow\, \unt \leftarrow {x x^r} \eeq
where we elide the dot and write $ab$ for $a\cdot b$.

\paragraph{\sf Remark.} 
Pregroups have been defined and used as partially ordered rather than preordered monoids, i.e. satisfying $\derreq x y \implies x=y$. The justification for reverting to preorders is given at the beginning of Sec.~\ref{Sec:repres}.

\paragraph{\sf Examples.} For applications in linguistics, the pregroup grammars are freely generated from some basic syntactic types. The elements of a free pregroup are sequences of literals, corresponding to all derived syntactic types, possibly annotated by sequences of a superscript $\ell$ or $r$. The ordering is generated by \eqref{eq:adj}. The observations in the next section will carve out a rich source of pregroups ``in nature''. An interesting pair of an example and a non-example, going back to \cite{Lambek-Moser}, which informed the introduction of pregroups into  algebra, was presented in \cite{LambekJ:Galois}. 

\subsection{Pregroups are properties}\label{Sec:property}
It is often convenient to expand the adjunction sequents in \eqref{eq:adj} to adjunction correspondences
\beq \label{eq:adjj}
\derr {x a} b \iff \derr a {x^\ell b} \qquad \qquad \qquad\qquad \derr {x^r a} b \iff \derr a {x b}
\eeq
valid for all $a,b\in A$. Conditions \eqref{eq:adj} and \eqref{eq:adjj} are equivalent characterizations of the adjunction operations $\ell$ and $r$. The third equivalent characterization is
\beq \label{eq:adjjj}
x^\ell\leftrightarrow \bigvee\big\{a|\derr {xa} \unt \big\} \qquad \qquad \qquad \qquad x^r \leftrightarrow \bigwedge\big\{b|\derr {\unt} {xb}\big\}
\eeq
These characterizations of adjunctions are well-known. We mention them to emphasize that being a  pregroup is an intrinsic \emph{property}\/ that a monoid may have, and not an extrinsic \emph{structure}\/ that may be added to it. E.g., if every element of a monoid has an inverse, then it is a group, and this is its intrinsic property. On the other hand, an addtive group may support many different multiplication operations, and can therefore be extended into a ring in many different ways, extrinsically. Ditto for an additive monoid, which can be extended into a rig\footnote{A \emph{rig}\/ is a ri\textbf{n}g without the \textbf{n}egatives. The same structure is also denoted by the less descriptive name \emph{semiring}.} in many different ways.  For a monoid, being a group is thus a property, whereas being a rig is a structure. A preordered monoid $(A,\mnd, \unt)$ may be a pregroup in at most one way, in if and only if for every $x$ it has
\begin{itemize}
\item suprema of sets in the form $\big\{a|\derr {xa} \unt \big\}$, and
\item infima of sets in the form $\big\{b|\derr {\unt} {xb}\big\}$.
\end{itemize}
Being a pregroup is thus a \emph{completeness property}\/ of a preordered monoid. E.g., all monoid structures over complete lattices are pregroups. Extending a preordered monoid into a pregroup is a \emph{completion}\/ of its preorder. The other way around, the pregroup operations also completely determine the pregroup preordering, since \eqref{eq:adjj} implies
\beq
\derr \unt {x^{\ell}y} \ \ \iff \ \ \derr x y \ \ \iff \ \ \derr {y^{r}x} \unt
\eeq
A 'positive cone' of a pregroup $A$, i.e. a set of elements $H = \{h| \derr h \unt\}$, can be characterized without mentioning the preordering as a submonoid of the monoid $A$ such that $h\in H$ implies $x^{r}hx\in H$ for all $x\in A$ and moreover $x,x^{r}\in H$ only if $\derreq x \unt$. Such a submonoid completely determines the preorder by $\derr x y \iff y^{r}x\in H$.

\smallskip 
The crucial property that drives the applications of pregroups as algebra of sequents suitable for presenting the syntax of natural languages is that their operations, the adjoints, close the open sequents in the form $\derr {x\cdots} \unt$ and $\derr {\unt} {x\cdots}$, which can be construed as capturing the processes of generation and parsing. This turns out to be an inherent capability of the spider algebras as well.

\section{Spiders}\label{Sec:frobspid}

The name \emph{spider}\/ has been used\footnote{The spiders emerged among the coauthors of \cite{PavlovicD:CQStruct} as a nickname for the "special Frobenius algebras" of \cite{PavlovicD:QMWS}. They diversified through the many contexts of the monograph \cite{CoeckeB:Picturing}.} to denote the monoid-comonoid couples  in monoidal categories that give rise to string diagrams in the general form
\beq\label{eq:spider}
\begin{split}
\newcommand{\fmnd}{\scriptscriptstyle \spmnd}
\newcommand{\fcmn}{\scriptscriptstyle \spcmn}
\def\JPicScale{1.1}
\ifx\JPicScale\undefined\def\JPicScale{1}\fi
\psset{unit=\JPicScale mm}
\psset{linewidth=0.3,dotsep=1,hatchwidth=0.3,hatchsep=1.5,shadowsize=1,dimen=middle}
\psset{dotsize=0.7 2.5,dotscale=1 1,fillcolor=black}
\psset{arrowsize=1 2,arrowlength=1,arrowinset=0.25,tbarsize=0.7 5,bracketlength=0.15,rbracketlength=0.15}
\begin{pspicture}(0,0)(19.38,20)
\psline(8.75,8.75)(8.75,6.25)
\psline(11.25,6.25)(15,2.5)
\psline(8.75,6.25)(2.5,2.5)
\psline(10,11.25)(10,8.75)
\psline(8.75,6.25)(11.25,6.25)
\psline(8.75,8.75)(11.25,8.75)
\psline(11.25,6.25)(11.25,8.75)
\psline(2.5,2.5)(2.5,0)
\psline(15,2.5)(15,0)
\psline(1.25,18.75)(8.75,13.75)
\psline(16.88,19.38)(10.62,13.75)
\psline(5.62,20)(5.62,17.5)
\psline(16.88,20)(16.88,19.38)
\psline(8.75,13.75)(8.75,11.25)
\psline(8.75,11.25)(11.25,11.25)
\psline(8.75,13.75)(11.25,13.75)
\psline(11.25,11.25)(11.25,13.75)
\psline(10.62,6.25)(12.5,1.88)
\psline(9.38,6.25)(7.5,1.88)
\psline(9,6)(5,2)
\psline(7.5,1.88)(7.5,0)
\psline(12.5,1.88)(12.5,0)
\psline(10,13.75)(13.12,18.12)
\psline(7.5,18.12)(7.5,20)
\psline(13.12,20)(13.12,18.12)
\psline(10,13.75)(7.5,18.12)
\psline(10,20)(10,13.75)
\psline(5,2)(5,0)
\psline(10,6.25)(10.62,0.62)
\psline(10.62,0.62)(10.62,0)
\psline(5.62,17.5)(9.38,13.75)
\psline(1.25,20)(1.25,18.75)
\psline(19.38,18.12)(11.25,13.75)
\psline(19.38,20)(19.38,18.12)
\rput(10,12.5){$\fcmn$}
\rput(10,7.5){$\fmnd$}
\end{pspicture}

\end{split}
\eeq
A spider may have any number of legs. The front legs are spanned by the comonoid operation, the hind legs by the monoid operation. In each case, the multiple applications of the operation, are "flattened out" using the associativity. The crucial feature of spider algebra is the normalization to the shape of a spider, whereby all applications of the monoid operation are  moved before all applications of the comonoid operation. The string diagrams representing the spider normalization laws are displayed in (\ref{eq:Frobspec-1}--\ref{eq:Frobspec-2}). The sequent rewrites corresponding to the spider laws in $\PRel$ are spelled out above the diagrams. Formally, a spider is thus a monoid-comonoid couple satisfying the following conditions. 

\beq\label{eq:Frobspec-1}
\begin{split}
\newcommand{\eqone}{\scriptstyle \exists s.\derrr {x,s} \spmnd u \wedge \derrr y \spcmn {s,v}} 
\newcommand{\eqtwo}{\scriptstyle \exists w. \derrr{x,y} \spmnd w \wedge \derrr w \spcmn {u,v}}
\newcommand{\eqthree}{\scriptstyle \exists t. \derrr x \spcmn  {u, t} \wedge \derrr{t,y} \spmnd  v }
\newcommand{\eqfour}{\scriptstyle \exists uv. \derrr x \spcmn {u, v}\wedge \derrr {u,v} \spmnd y}
\newcommand{\eqfive}{\scriptstyle \derrr x A y}
\newcommand{\eex}{\scriptstyle x}
\newcommand{\eey}{\scriptstyle y}
\newcommand{\eet}{\scriptstyle t}
\newcommand{\eeu}{\scriptstyle u}
\newcommand{\eev}{\scriptstyle v}
\newcommand{\eew}{\scriptstyle w}
\newcommand{\ees}{\scriptstyle s}
\newcommand{\eez}{\scriptstyle z}
\newcommand{\fmnd}{\scriptscriptstyle \spmnd}
\newcommand{\fcmn}{\scriptscriptstyle \spcmn}
\def\JPicScale{1.1}
\ifx\JPicScale\undefined\def\JPicScale{1}\fi
\psset{unit=\JPicScale mm}
\psset{linewidth=0.3,dotsep=1,hatchwidth=0.3,hatchsep=1.5,shadowsize=1,dimen=middle}
\psset{dotsize=0.7 2.5,dotscale=1 1,fillcolor=black}
\psset{arrowsize=1 2,arrowlength=1,arrowinset=0.25,tbarsize=0.7 5,bracketlength=0.15,rbracketlength=0.15}
\begin{pspicture}(0,0)(100,27.5)
\rput(10,27.5){$\eqone$}
\rput(50,27.5){$\eqtwo$}
\psline(15,20)(15,12.5)
\psline(10,6.25)(10,0)
\psline(0.01,7.5)(0,0)
\psline(15,12.5)(11.25,8.75)
\psline(0.01,7.5)(3.76,11.25)
\psline(6.26,11.25)(8.76,8.75)
\psline(5,20)(4.99,13.75)
\rput(90,27.5){$\eqthree$}
\rput[l](6.25,20){$\eeu$}
\rput[l](16.25,20){$\eev$}
\rput[r](-1.25,0){$\eex$}
\rput[r](8.75,0){$\eey$}
\psline(3.76,13.75)(6.26,13.75)
\psline(3.76,11.25)(6.26,11.25)
\psline(6.26,13.75)(6.26,11.25)
\psline(3.76,13.75)(3.76,11.25)
\psline(8.76,8.75)(11.26,8.75)
\psline(8.76,6.25)(11.26,6.25)
\psline(11.26,8.75)(11.26,6.25)
\psline(8.76,8.75)(8.76,6.25)
\psline(48.75,8.75)(48.75,6.25)
\psline(51.25,6.25)(55,2.5)
\psline(48.75,6.25)(45,2.5)
\psline(50,11.25)(50,8.75)
\psline(48.75,6.25)(51.25,6.25)
\psline(48.75,8.75)(51.25,8.75)
\psline(51.25,6.25)(51.25,8.75)
\psline(45,17.5)(48.75,13.75)
\psline(55,17.5)(51.25,13.75)
\psline(45,20)(45,17.5)
\psline(55,20)(55,17.5)
\rput(70,10){\SUB}
\psline(48.75,13.75)(48.75,11.25)
\psline(48.75,11.25)(51.25,11.25)
\psline(48.75,13.75)(51.25,13.75)
\psline(51.25,11.25)(51.25,13.75)
\psline(45,2.5)(45,0)
\psline(55,2.5)(55,0)
\rput(30,10){\SUP}
\psline(85,20)(85,12.5)
\psline(90,6.25)(90,0)
\psline(100,7.5)(100,0)
\psline(85,12.5)(88.75,8.75)
\psline(91.25,8.75)(93.75,11.25)
\psline(96.25,11.25)(100,7.5)
\psline(95,20)(94.99,13.75)
\psline(93.75,13.75)(96.25,13.75)
\psline(93.75,11.25)(96.25,11.25)
\psline(96.25,13.75)(96.25,11.25)
\psline(93.75,13.75)(93.75,11.25)
\psline(88.75,8.75)(91.25,8.75)
\psline(88.75,6.25)(91.25,6.25)
\psline(91.25,8.75)(91.25,6.25)
\psline(88.75,8.75)(88.75,6.25)
\rput[bl](8.12,10.62){$\ees$}
\rput[br](91.88,10.62){$\eet$}
\rput[l](46.25,20){$\eeu$}
\rput[l](56.25,20){$\eev$}
\rput[l](86.25,20){$\eeu$}
\rput[l](96.25,20){$\eev$}
\rput[r](43.75,0){$\eex$}
\rput[r](53.75,0){$\eey$}
\rput[r](88.75,0){$\eex$}
\rput[r](98.75,0){$\eey$}
\rput(70,27.5){\THEN}
\rput(30,27.5){\IF}
\rput[l](51.25,10){$\eew$}
\rput(5,12.5){$\fmnd$}
\rput(90,7.5){$\fcmn$}
\rput(50,12.5){$\fcmn$}
\rput(10,7.5){$\fcmn$}
\rput(50,7.5){$\fmnd$}
\rput(95,12.5){$\fmnd$}
\end{pspicture}

\end{split}
\eeq

\bigskip
\beq\label{eq:Frobspec-2}
\begin{split}
\newcommand{\eqone}{\scriptstyle \exists s.\derrr {x,s} \spmnd u \wedge \derrr y \spcmn {s,v}} 
\newcommand{\eqtwo}{\scriptstyle \exists z. \derrr{x,y} \spmnd z \wedge \derrr z \spcmn {u,v}}
\newcommand{\eqthree}{\scriptstyle \exists t. \derrr x \spcmn  {u,t} \wedge \derrr{t,y} \spmnd  v }
\newcommand{\eqfour}{\scriptstyle \exists uv. \derrr x \spcmn {u, v}\wedge \derrr {u,v} \spmnd y}
\newcommand{\eqfive}{\scriptstyle \derrr x A y}
\newcommand{\eex}{\scriptstyle x}
\newcommand{\eey}{\scriptstyle y}
\newcommand{\eet}{\scriptstyle t}
\newcommand{\eeu}{\scriptstyle u}
\newcommand{\eev}{\scriptstyle v}
\newcommand{\eew}{\scriptstyle w}
\newcommand{\ees}{\scriptstyle s}
\newcommand{\eez}{\scriptstyle z}
\newcommand{\fmnd}{\scriptscriptstyle \spmnd}
\newcommand{\fcmn}{\scriptscriptstyle \spcmn}
\def\JPicScale{1.1}
\ifx\JPicScale\undefined\def\JPicScale{1}\fi
\psset{unit=\JPicScale mm}
\psset{linewidth=0.3,dotsep=1,hatchwidth=0.3,hatchsep=1.5,shadowsize=1,dimen=middle}
\psset{dotsize=0.7 2.5,dotscale=1 1,fillcolor=black}
\psset{arrowsize=1 2,arrowlength=1,arrowinset=0.25,tbarsize=0.7 5,bracketlength=0.15,rbracketlength=0.15}
\begin{pspicture}(0,0)(33.75,27.5)
\rput(2.5,27.5){$\eqfour$}
\rput(32.5,27.5){$\eqfive$}
\psline(5,15)(7.5,12.5)
\psline(2.5,15)(0,12.5)
\psline(3.75,2.5)(3.75,0)
\psline(0,12.5)(0,7.5)
\psline(7.5,12.5)(7.5,7.5)
\psline(0,7.5)(2.5,5)
\psline(7.5,7.5)(5,5)
\psline(32.5,20)(32.5,0)
\rput(22.5,10){\SUB}
\psline(2.5,17.5)(2.5,15)
\psline(2.5,15)(5,15)
\psline(2.5,17.5)(5,17.5)
\psline(5,15)(5,17.5)
\psline(2.5,5)(2.5,2.5)
\psline(2.5,2.5)(5,2.5)
\psline(2.5,5)(5,5)
\psline(5,2.5)(5,5)
\rput(22.5,27.5){\THEN}
\rput[r](-1.25,10){$\eeu$}
\rput[l](8.75,10){$\eev$}
\rput[r](2.5,0){$\eex$}
\rput[l](5,20){$\eey$}
\psline(3.75,20)(3.75,17.5)
\rput[r](31.25,0){$\eex$}
\rput[l](33.75,20){$\eey$}
\rput(3.75,3.75){$\fcmn$}
\rput(3.75,16.25){$\fmnd$}
\end{pspicture}

\end{split}
\eeq
It is easy to see that the implications and containments dual to those displayed in (\ref{eq:Frobspec-1}--\ref{eq:Frobspec-2}) are satisfied in all monoids: those in \eqref{eq:Frobspec-1} by the associativity, the one on \eqref{eq:Frobspec-2} because of the monoid unit. This was discussed at the end of Sec.~\ref{Sec:mon}. Strengthened to equations, the conditions in \eqref{eq:Frobspec-1} are known as the  \emph{Frobenius conditions}, for reasons explained in \cite{LawvereFW:equhcs,PavlovicD:mapsII}, and the structures satisfying them are called \emph{Frobenius algebras}   \cite{CarboniA:matrices,Carboni-Walters,PavlovicD:QMWS,PavlovicD:Qabs12,StreetR:Frobenius}. The equational version of the condition in \eqref{eq:Frobspec-2} makes the induced convolution operation isometric, and the Frobenius algebras satisfying it are called \emph{special}, or shortly \emph{spiders}. The sequent form in (\ref{eq:Frobspec-1}--\ref{eq:Frobspec-2}) displays the logical content of the spider equations without the implications that are satisfied by all monoids. Since the eliminated implications are satisfied automatically, the presented analyses apply to the standard equational definitions and intuitions without a gain in generality.

\section{Pregroups are pointed spiders}\label{Sec:thm}


\begin{theorem}\label{thm:one}
A monoid in $\PRel$ is a pregroup if and only it is pointed in the sense of \eqref{eq:untmap} and a spider in the sense of {\rm (\ref{eq:Frobspec-1}--\ref{eq:Frobspec-2})}.
\end{theorem}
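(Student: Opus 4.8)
The plan is to prove the two implications separately, reducing each to sequent manipulations that become transparent once the comultiplication $\spcmn=\spmnd^\ddag$ is expressed through the monoid multiplication; throughout I write $xy$ for the representative product and $\unt$ for the representative unit. For the forward direction, a pregroup is representable by definition, hence pointed, so it remains to verify the spider sequents. Representability turns the comonoid side into elementwise statements by \eqref{eq:represpreg} and \eqref{eq:represdag}, namely $\derrr{x,y}\spmnd u\iff\derr{xy}u$ and $\derrr z\spcmn{x,y}\iff\derr z{xy}$, so the special law \eqref{eq:Frobspec-2} collapses to $\derr x{uv}\wedge\derr{uv}y\Rightarrow\derr xy$, which is mere transitivity. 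For the first containment of the Frobenius law \eqref{eq:Frobspec-1} I only need $\derr{xy}{uv}\Rightarrow\exists s.(\derr{xs}u\wedge\derr y{sv})$, and I will discharge it with the witness $s=x^\ell u$, using $\derr{xx^\ell}\unt$ from \eqref{eq:adj} to get $\derr{xs}u$ and the correspondence \eqref{eq:adjj} to rewrite $\derr{xy}{uv}$ as $\derr y{x^\ell uv}$. The mirror containment of \eqref{eq:Frobspec-1} is handled symmetrically with $t=u^r x$, using $\derr\unt{uu^r}$ and the right half of \eqref{eq:adjj}.

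For the converse I must produce the two ingredients a pointed monoid still lacks: strictness and the adjoints. Single-valuedness of $\spmnd$ is literally the special law, since the map condition \eqref{eq:sv} for $\Phi=\spmnd$ coincides with \eqref{eq:Frobspec-2}. Granting strictness for the moment, the adjoints then fall out of the Frobenius law written in representable form. Writing the first half of \eqref{eq:Frobspec-1} as $\derr{xy}{uv}\iff\exists s.(\derr{xs}u\wedge\derr y{sv})$ and substituting $u=\unt$, $v=x$, $y=\unt$ makes the left side the reflexivity $\derr xx$, so the right side furnishes an $s$ with $\derr{xs}\unt$ and $\derr\unt{sx}$, i.e. a left adjoint $x^\ell:=s$ realising the left pair of \eqref{eq:adj}; the symmetric substitution into the other Frobenius containment of \eqref{eq:Frobspec-1} produces $x^r$ with the right pair. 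By the equivalence of \eqref{eq:adj} and \eqref{eq:adjj} recorded in Sec.~\ref{Sec:property}, these witnesses exhibit $A$ as a pregroup.

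The main obstacle is the missing half of strictness, the totality $\id_{A\times A}\subseteq\spmnd\lcomp\spcmn$. This cannot follow from the Frobenius and special laws alone: relationally a spider is a disjoint union of groups, whose multiplication is only partial, being defined within each component, and what forbids several components is precisely pointedness, which forces a single unit element $\unt$ in place of a set of local units. The delicate point is that before totality is known the dagger $\spcmn$ is the unwieldy upper-bound operator of \eqref{eq:duals}, so the naive witness $s=\unt$ for the reflexive instance of \eqref{eq:Frobspec-1} fails. I expect to recover totality by combining pointedness, which already yields the surjectivity of $\spmnd$ and the identity $\spcmn\lcomp\spmnd=\id_A$ of \eqref{eq:totinj}, with the Frobenius law, using it to transport the product witnessed at the unit along $\spcmn$ to an arbitrary pair; establishing this connectivity is where the substantive work lies, after which the representable reductions above close the argument.
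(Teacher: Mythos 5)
Your forward direction is correct and is essentially the paper's argument: the same witness $s=x^\ell u$ for the first Frobenius leg, a harmless variant $t=u^r x$ where the paper takes $t=vy^r$ for the second, and the same observation that the specialty and point conditions reduce to transitivity once everything is representable. The converse, however, has a genuine gap, and it sits exactly where you flag it: totality of $\spmnd$, i.e.\ the left half of \eqref{eq:monmap}, is never proved. You write that you ``expect to recover totality'' by transporting products witnessed at the unit along $\spcmn$, and that ``establishing this connectivity is where the substantive work lies''; since your extraction of the adjoints is explicitly conditional on strictness, the entire converse remains conditional on a step you have only conjectured. Your negative diagnosis is sound --- Frobenius plus specialty alone cannot yield totality, as the disjoint unions of groups in $\Rel$ show, so pointedness must enter --- but diagnosing what is missing is not the same as supplying it.

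The missing step is also much shorter than your sketch anticipates; the paper closes it in the single diagram chase \eqref{eq:strict}, using only ingredients you already have, and without ever unfolding the formula \eqref{eq:duals} for $\spcmn$ (so your worry about the ``unwieldy upper-bound operator'' never materializes). Expand $\id_{A\times A}=\id_A\times\id_A$ by the comonoid counit law $\scomp{\spcmn}{(\id_A\times\spcun)}=\id_A$ on the first wire and the monoid unit law $\scomp{(\spunt\times\id_A)}{\spmnd}=\id_A$ on the second. After the interchange law, the $\spcun$ and the $\spunt$ so created sit on adjacent wires across $\OOne$, hence form the composite $\scomp{\spcun}{\spunt}$, and the right half of the point condition \eqref{eq:untmap}, namely $\scomp{\spcun}{\spunt}\subseteq\id_A$, erases this pair at the cost of one inclusion. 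What remains is $\scomp{(\spcmn\times\id_A)}{(\id_A\times\spmnd)}$, and the containment $\scomp{(\spcmn\times\id_A)}{(\id_A\times\spmnd)}\subseteq\scomp{\spmnd}{\spcmn}$ is the direction of the Frobenius equality that, as the paper notes right after \eqref{eq:Frobspec-2}, holds in every dual monoid--comonoid couple by associativity alone. Chaining the three moves gives $\id_{A\times A}\subseteq\scomp{\spmnd}{\spcmn}$, i.e.\ totality. With that in hand, your remaining steps --- single-valuedness being literally \eqref{eq:Frobspec-2}, and the adjoints obtained by the substitutions $u=y=\unt$, $v=x$ and its mirror in \eqref{eq:err} and \eqref{eq:ell} --- coincide with the paper's proof and close the theorem.
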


\begin{proof}
Suppose that $\spmnd = \mnd:A\times A \to A$ is a pregroup operation, with the dual $\spcmn = \cmn:A\prel A\times A$. Since $\spmnd$ is thus a map, (\ref{eq:represpreg}--\ref{eq:pointed}) yield the representations
\begin{align*}
\derrr {x,y}\spmnd u & \iff \  \derrr{\mult x y} A u&&& \derrr u\spcmn {x,y} & \iff \  \derrr u A {\mult x y}\\
\derrr \emptyset \unt u & \iff \  \derrr \unt A u&&& \derrr u\cun \emptyset & \iff \  \derrr u A \unt\notag
\end{align*}
The assumption $\derrr {x,y}\spmnd z \wedge \derrr z \spcmn {u,v}$ from \eqref{eq:Frobspec-1} thus becomes $\derrr {xy} A z \wedge \derrr z A {uv}$, where we elide the dots. By transitivity, and eliding the type $A$, this boils down to $\derr{xy}{uv}$. To prove that the conclusions of the Frobenius condition are satisfied, we need to construct 
\begin{enumerate}[i)]
\item $s$ such that $\derr {xs} u$ and $\derr y {sv}$, and
\item $t$ such that $\derr x {ut}$ and $\derr {ty} v$.
\end{enumerate}
Since $A$ is a pregroup, we use the adjoints to define $s = x^\ell u$ and $t= vy^r$. Together with $\derr{xy}{uv}$, the sequents $\derr \unt {x^\ell x}$ and $\derr \unt {yy^r}$ yield 
\begin{align}
\left(y \leftarrow x^\ell x y \leftarrow x^\ell u v = sv\right) &&\mbox{ and }& \left( xs= xx^\ell u\leftarrow u\right)\tag {i}\\
\left(x \leftarrow xyy^r \leftarrow u vy^r = ut\right) &&\mbox{ and }& \left( ty= vy^r y\leftarrow v\right)\tag{ii}
\end{align}
To complete the proof that the pregroup $A$ is a pointed spider, it remains to be proved that the monoid satisfies the "specialty" condition \eqref{eq:Frobspec-2} and that the point requirement \eqref{eq:untmap}. Both conditions are satisfied by any representable monoid, since they correspond to
\bea \exists uv. \left(x\leftarrow uv \leftarrow y\right) &\implies & \derr x y \\
\derr x \unt \wedge \derr \unt y  &\implies& \derr x y\label{eq:totproof}
 \eea
This completes the proof that a pregroup structure induces a pointed spider.

\smallskip
The other way around, suppose that a dual monoid-comonoid couple $(A,\spmnd, \spcmn, \spunt, \spcun)$ in $\PRel$ satisfy conditions (\ref{eq:Frobspec-1}--\ref{eq:Frobspec-2}) for a spider, and also the point condition  \eqref{eq:untmap}. The Frobenius conditions \eqref{eq:Frobspec-1} and the point condition \eqref{eq:untmap} make $\spmnd$ total, as follows
\beq\label{eq:strict}
\begin{split}
\newcommand{\fmnd}{\scriptscriptstyle \spmnd}
\newcommand{\fcmn}{\scriptscriptstyle \spcmn}
\newcommand{\funt}{\scriptscriptstyle \spunt}
\newcommand{\fcun}{\scriptscriptstyle \spcun}
\def\JPicScale{1.1}
\ifx\JPicScale\undefined\def\JPicScale{1}\fi
\psset{unit=\JPicScale mm}
\psset{linewidth=0.3,dotsep=1,hatchwidth=0.3,hatchsep=1.5,shadowsize=1,dimen=middle}
\psset{dotsize=0.7 2.5,dotscale=1 1,fillcolor=black}
\psset{arrowsize=1 2,arrowlength=1,arrowinset=0.25,tbarsize=0.7 5,bracketlength=0.15,rbracketlength=0.15}
\begin{pspicture}(0,0)(132.5,30)
\psline(127.5,13.75)(127.5,11.25)
\psline(130,11.25)(132.5,8.75)
\psline(127.5,11.25)(125,8.75)
\psline(128.75,16.25)(128.75,13.75)
\psline(127.5,11.25)(130,11.25)
\psline(127.5,13.75)(130,13.75)
\psline(130,11.25)(130,13.75)
\psline(125,21.25)(127.5,18.75)
\psline(132.5,21.25)(130,18.75)
\psline(125,30)(125,21.25)
\psline(132.5,30)(132.5,21.25)
\rput(56.25,15){\SUB}
\psline(127.5,18.75)(127.5,16.25)
\psline(127.5,16.25)(130,16.25)
\psline(127.5,18.75)(130,18.75)
\psline(130,16.25)(130,18.75)
\psline(125,8.75)(125,0)
\psline(132.5,8.75)(132.5,0)
\psline(17.5,30)(17.5,16.25)
\psline(21.25,11.25)(21.25,0)
\psline(25,18.75)(25,16.25)
\psline(17.5,16.25)(20,13.75)
\psline(22.5,13.75)(25,16.25)
\psline(30,21.25)(32.5,18.75)
\psline(28.75,25)(28.74,23.75)
\psline(27.5,23.75)(30,23.75)
\psline(27.5,21.25)(30,21.25)
\psline(30,23.75)(30,21.25)
\psline(27.5,23.75)(27.5,21.25)
\psline(20,13.75)(22.5,13.75)
\psline(20,11.25)(22.5,11.25)
\psline(22.5,13.75)(22.5,11.25)
\psline(20,13.75)(20,11.25)
\rput(21.25,12.5){$\fcmn$}
\rput(128.75,17.5){$\fcmn$}
\rput(128.75,12.5){$\fmnd$}
\rput(28.75,22.5){$\fmnd$}
\psline(27.5,27.5)(30,27.5)
\psline(27.5,25)(30,25)
\psline(30,27.5)(30,25)
\psline(27.5,27.5)(27.5,25)
\psline(32.5,18.75)(35,18.75)
\psline(32.5,16.25)(35,16.25)
\psline(35,18.75)(35,16.25)
\psline(32.5,18.75)(32.5,16.25)
\psline(32.5,13.75)(35,13.75)
\psline(32.5,11.25)(35,11.25)
\psline(35,13.75)(35,11.25)
\psline(32.5,13.75)(32.5,11.25)
\psline(46.25,30)(46.25,18.75)
\psline(38.75,6.25)(38.75,5)
\psline(35,11.25)(37.5,8.75)
\psline(40,8.75)(42.5,11.25)
\psline(47.5,16.25)(50,13.75)
\psline(45,18.75)(47.5,18.75)
\psline(45,16.25)(47.5,16.25)
\psline(47.5,18.75)(47.5,16.25)
\psline(45,18.75)(45,16.25)
\psline(37.5,8.75)(40,8.75)
\psline(37.5,6.25)(40,6.25)
\psline(40,8.75)(40,6.25)
\psline(37.5,8.75)(37.5,6.25)
\rput(38.75,7.5){$\fcmn$}
\rput(46.25,17.5){$\fmnd$}
\psline(37.5,5)(40,5)
\psline(37.5,2.5)(40,2.5)
\psline(40,5)(40,2.5)
\psline(37.5,5)(37.5,2.5)
\psline(25,18.75)(27.5,21.25)
\psline(42.5,13.75)(45,16.25)
\psline(42.5,13.75)(42.5,11.25)
\psline(50,13.75)(50,0)
\psline(62.5,30)(62.5,16.25)
\psline(66.25,11.25)(66.25,0)
\psline(70,18.75)(70,16.25)
\psline(62.5,16.25)(65,13.75)
\psline(67.5,13.75)(70,16.25)
\psline(75,21.25)(77.5,18.75)
\psline(73.75,25)(73.74,23.75)
\psline(72.5,23.75)(75,23.75)
\psline(72.5,21.25)(75,21.25)
\psline(75,23.75)(75,21.25)
\psline(72.5,23.75)(72.5,21.25)
\psline(65,13.75)(67.5,13.75)
\psline(65,11.25)(67.5,11.25)
\psline(67.5,13.75)(67.5,11.25)
\psline(65,13.75)(65,11.25)
\rput(66.25,12.5){$\fcmn$}
\rput(73.75,22.5){$\fmnd$}
\psline(72.5,27.5)(75,27.5)
\psline(72.5,25)(75,25)
\psline(75,27.5)(75,25)
\psline(72.5,27.5)(72.5,25)
\psline(88.75,30)(88.75,18.75)
\psline(81.25,6.25)(81.25,5)
\psline(77.5,11.25)(80,8.75)
\psline(82.5,8.75)(85,11.25)
\psline(90,16.25)(92.5,13.75)
\psline(87.5,18.75)(90,18.75)
\psline(87.5,16.25)(90,16.25)
\psline(90,18.75)(90,16.25)
\psline(87.5,18.75)(87.5,16.25)
\psline(80,8.75)(82.5,8.75)
\psline(80,6.25)(82.5,6.25)
\psline(82.5,8.75)(82.5,6.25)
\psline(80,8.75)(80,6.25)
\rput(81.25,7.5){$\fcmn$}
\rput(88.75,17.5){$\fmnd$}
\psline(80,5)(82.5,5)
\psline(80,2.5)(82.5,2.5)
\psline(82.5,5)(82.5,2.5)
\psline(80,5)(80,2.5)
\psline(70,18.75)(72.5,21.25)
\psline(85,13.75)(87.5,16.25)
\psline(85,13.75)(85,11.25)
\psline(92.5,13.75)(92.5,0)
\psline(77.5,18.75)(77.5,11.25)
\psline(102.5,30)(102.5,16.25)
\psline(106.25,11.25)(106.25,0)
\psline(102.5,16.25)(105,13.75)
\psline(105,13.75)(107.5,13.75)
\psline(105,11.25)(107.5,11.25)
\psline(107.5,13.75)(107.5,11.25)
\psline(105,13.75)(105,11.25)
\rput(106.25,12.5){$\fcmn$}
\psline(111.25,30)(111.25,18.75)
\psline(112.5,16.25)(115,13.75)
\psline(110,18.75)(112.5,18.75)
\psline(110,16.25)(112.5,16.25)
\psline(112.5,18.75)(112.5,16.25)
\psline(110,18.75)(110,16.25)
\rput(111.25,17.5){$\fmnd$}
\psline(107.5,13.75)(110,16.25)
\psline(115,13.75)(115,0)
\psline(6.25,30)(6.25,0)
\psline(0,30)(0,0)
\rput(11.25,15){\EQLS}
\rput(97.5,15){\EQLS}
\rput(120,15){\EQLS}
\rput(81.25,3.75){$\funt$}
\rput(38.75,3.75){$\funt$}
\rput(33.75,17.5){$\funt$}
\rput(33.75,12.5){$\fcun$}
\rput(73.75,26.25){$\fcun$}
\rput(28.75,26.25){$\fcun$}
\end{pspicture}

\end{split}
\eeq
Hence the left-hand part of the strictness \eqref{eq:monmap}. The right-hand part of the strictness condition is just the specialty condition \eqref{eq:Frobspec-2}. Since the monoid is thus strict, and by the assumption in \eqref{eq:untmap} also pointed, we have the representatives in (\ref{eq:represpreg}--\ref{eq:pointed}) again. To prove that the monoid $\spmnd: A\times A \to A$ is a pregroup, we derive the adjoints satisfying \eqref{eq:adj} from the Frobenius conditions in \eqref{eq:Frobspec-1}. For a representable monoid, these conditions take the form
\bea
\derr {xy} {uv} & \implies & \exists s.\Big(\derr {xs} u \wedge \derr y {sv} \Big) \label{eq:err} \\
\derr {xy} {uv} &\implies&  \exists t. \Big( \derr x {ut} \wedge \derr{ty} v  \Big)  \label{eq:ell}
\eea
For $v=x$ and $u=y=\unt=\spunt$, \eqref{eq:err} becomes $\derr {x} {x} \implies \exists s.\Big(\derr {xs} \unt \wedge \derr \unt {sx} \Big)$. Hence $x^\ell = s$. Setting $x=v=\unt=\spunt$ and $y = u$, \eqref{eq:ell} boils down to $\derr {y} {y} \implies \exists t. \Big( \derr \unt {yt} \wedge \derr{ty} \unt  \Big)$. Hence $y^r = t$. Since $x$ and $y$ are arbitrary, the statement is proved.
\end{proof}
%

\section{Frobenius algebras are residuated  monoids}\label{Sec:residuated}

The tight alignment of the structures of pregroups and spiders depends  essentially on the assumptions which make them into maps. While convenient, these assumptions are not always natural for sequent derivations, and preclude some applications. Towards a finer alignment of the structures, and a more flexible derivational framework, in this section we characterize the Frobenius conditions \eqref{eq:Frobspec-1} in terms of the monoid residue operations, on which Lambek's categorial grammars were based \cite{LambekJ:math-sentence}. In the next section we shall add the isometry conditions \eqref{eq:Frobspec-2} back in, and return to the spiders.

A preordered monoid is said to be \emph{residuated}\/ when its operation admits the adjoints, which are for monoids called the residuations, because that is what they are for the extensions of modular multiplication monoids. Residuated monoids were first studied in \cite{Ward-Dilworth}, and the monoids of additive subgroups of rings ordered by inclusion appear in  \cite[\S 1.2]{LambekJ:rings}. Given a prelational monoid-comonoid dual couple $(A,\mnd,\cmn,\unt,\cun)$, we define 
\begin{itemize}
\item the \emph{right residuations}\/ as the mappings $\Rr,\Lr:\Up A\to \Do A$ such that
\beq\label{eq:radj-res}
\derrr {\xi , y} \mnd z \iff \derrr y \cmn {\xi^{\Rr}, z}\qquad \qquad\qquad \qquad \derrr {x, \upsilon} \mnd z \iff \derrr x \cmn {z, \upsilon^{\Lr}}  
\eeq
hold for all $x,y,z\in A$ and $\xi,\upsilon \in \Up A$;
\item the \emph{left residuations}\/ as the mappings $\Ll,\Rl:\Do A\to \Up A$ such that
\beq\label{eq:ladj-res}
\derrr {x} \cmn {y,\zeta } \iff \derrr {x,\zeta^\Ll} \mnd {y} \qquad \qquad\qquad \qquad 
\derrr {x} \cmn {\upsilon, z} \iff \derrr {\upsilon^\Rl, x} \mnd {z}
\eeq
hold for all $x,y,z\in A$ and $\upsilon,\zeta \in \Up A$
\end{itemize}


\begin{theorem}\label{thm:two}
A monoid in $\PRel$ satisfies the Frobenius condition if and only if it is left residuated, or equivalently if and only if it is right residuated.
\end{theorem}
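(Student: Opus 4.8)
The plan is to prove the two biconditionals ``Frobenius $\iff$ right residuated'' and ``Frobenius $\iff$ left residuated'' separately; the concluding ``or equivalently'' then drops out, since both residuation properties are pinned to the same Frobenius condition. Moreover the duality $O\colon\PRel^{op}\to\PRel$ sends $\mnd$ to the order-reversed multiplication and interchanges the two clauses of \eqref{eq:radj-res} (and likewise the two clauses of \eqref{eq:ladj-res}) while preserving the Frobenius condition \eqref{eq:Frobspec-1}, so it suffices to treat one clause of one residuation in detail and obtain its partner by symmetry. Throughout I read a set-valued slot universally: $\derrr{\xi,y}\mnd z$ abbreviates $\forall a.(a\in\xi\Rightarrow\derrr{a,y}\mnd z)$ and $\derrr y\cmn{\xi^\Rr, z}$ abbreviates $\forall w.(w\in\xi^\Rr\Rightarrow\derrr y\cmn{w,z})$, matching the infimum-valued extensions $\mnd_{\ast}$ and $\cmn_{\#}$ constructed above. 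Under this reading each residuation law says that the prelation ``multiply the left slot by $\xi$'', regarded as an arrow $A\prel A$, has a right adjoint that is computed by the comultiplication against the single lower set $\xi^\Rr$.

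For Frobenius $\Rightarrow$ right residuated, I would first observe that, independently of any Frobenius hypothesis, the Galois connections \eqref{eq:galoisast}--\eqref{eq:galoisdual} already equip the prelation ``left-multiply by $\xi$'' with a right adjoint $\rho_\xi\colon A\prel A$. What \eqref{eq:radj-res} demands on top of this is that the adjoint be represented by $\cmn$ against a fixed parameter. The step is therefore to define $\xi^\Rr\in\Do A$ as the transpose of the counit $\cun$ along $\mnd$ --- equivalently, the supremum in $\Do A$ of the mediating witnesses $s$ --- and to check the biconditional for all $y,z$. The forward inclusion $\derrr{\xi,y}\mnd z\Rightarrow\derrr y\cmn{\xi^\Rr, z}$ is exactly the left-hand Frobenius containment of \eqref{eq:Frobspec-1}: from $\derrr{x,y}\mnd z$ the Frobenius law manufactures the mediating $s$ with $\derrr{x,s}\mnd u$ and $\derrr y\cmn{s,v}$, and this is precisely what transposes the multiplication into a comultiplication against $\xi^\Rr$. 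The reverse inclusion holds in every monoid; it is the associativity/counit containment dual to \eqref{eq:Frobspec-1} already recorded after \eqref{eq:Frobspec-2}, hence comes for free. The second clause of \eqref{eq:radj-res} is the mirror image under $O$, and the two together give right residuation.

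For the converse, residuated $\Rightarrow$ Frobenius, I would run this backwards. Assuming the maps of \eqref{eq:radj-res}, to obtain the left-hand case of \eqref{eq:Frobspec-1} I start from the hypothesis $\derrr{x,y}\mnd z\wedge\derrr z\cmn{u,v}$, read $\derrr{x,y}\mnd z$ as $\derrr{\uparrow\!x,\,y}\mnd z$ by upper-closedness of $\mnd$, and push it through the right-residual biconditional \eqref{eq:radj-res}; the resulting $\cmn$-statement against $(\uparrow\!x)^\Rr$, combined with $\derrr z\cmn{u,v}$ and coassociativity, yields the mediating witness $s$ with $\derrr{x,s}\mnd u$ and $\derrr y\cmn{s,v}$ demanded by \eqref{eq:Frobspec-1}. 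The second clause of \eqref{eq:radj-res} delivers the right-hand case in the same fashion, and since the reverse containments of \eqref{eq:Frobspec-1} are automatic this recovers the full Frobenius condition. The equivalence ``Frobenius $\iff$ left residuated'' is entirely parallel: one reads \eqref{eq:ladj-res} as the adjointness of ``right-multiply by $\zeta$'' and transposes $\cmn$-derivations into $\mnd$-derivations, or one applies the arrow-level duality $\ddag$, which swaps $\mnd$ with $\cmn=\mnd^\ddag$ and thereby turns \eqref{eq:radj-res} into \eqref{eq:ladj-res}.

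The main obstacle I anticipate is the uniformity of the residual in the non-representable setting. Because $\mnd$ is a genuine prelation rather than a function, $\xi^\Rr$ cannot be taken to be an adjoint element $x^\ell$ as in a pregroup; it must be constructed as a single element of $\Do A$ that makes \eqref{eq:radj-res} hold simultaneously for all $y$ and $z$, and the witness $s$ of \eqref{eq:Frobspec-1} must be recovered from it by a comultiplication rather than read off directly. Separating, in each biconditional, the genuine Frobenius content from the cost-free associativity/counit half, and confirming that the completion Galois connections \eqref{eq:galoisast}--\eqref{eq:galoisdual} produce this $\xi^\Rr$ exactly when \eqref{eq:Frobspec-1} holds, is the technical core of the argument.
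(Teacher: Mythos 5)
Your forward direction (Frobenius $\Rightarrow$ residuated) is essentially the paper's own argument: the paper too defines the residual explicitly as the composite $\xi^{\Rr} = \left((\xi\times A);\spmnd;\spcun\right)$ in \eqref{eq:residues} --- your ``transpose of the counit along $\mnd$'' --- and verifies \eqref{eq:radj-res} by chasing the left-hand Frobenius condition \eqref{eq:residues-frob-1}, with the remaining half of the biconditional supplied by the monoid laws, as you say. One imprecision: the Galois connections \eqref{eq:galoisast}--\eqref{eq:galoisdual} live between the completions $\Up A$ and $\Do A$; they do not give ``left-multiply by $\xi$'' a right adjoint \emph{prelation} $A\prel A$ (a prelation has an adjoint in $\PRel$ only when it is a map). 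You correct for this in the next sentence, so the forward direction survives.

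The converse is where you genuinely diverge from the paper, and where your argument has a gap: the mediating witness you extract by coassociativity is not uniform enough to be pushed back through the residuation biconditional. Concretely, under your declared universal reading, $\derrr{x,y}\mnd z$ gives $\derrr y\cmn{w,z}$ for \emph{every} $w\in(\uparrow\! x)^{\Rr}$; combining with $\derrr z\cmn{u,v}$, coassociativity then produces, for each such $w$ \emph{separately}, a witness $s_w$ with $\derrr y\cmn{s_w,v}$ and $\derrr{s_w}\cmn{w,u}$. But to run \eqref{eq:radj-res} backwards and obtain $\derrr{\uparrow\! x,s}\mnd u$ you need a \emph{single} $s$ with $\derrr s\cmn{w,u}$ for all $w\in(\uparrow\! x)^{\Rr}$ simultaneously, and nothing makes the $s_w$ coincide --- precisely because, as you note yourself, $(\uparrow\! x)^{\Rr}$ need not be principal. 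Switching to an existential reading repairs that step but breaks your last one: an $a\in\uparrow\! x$ with $\derrr{a,s}\mnd u$ only gives $\derrr a A x$, which is the wrong direction for the upper-closedness \eqref{eq:prel} of $\mnd$, so $\derrr{x,s}\mnd u$ does not follow. So under either reading one link of your chain fails. The paper avoids the issue by never chasing elements through the biconditional: it first converts the (left) residuation into the structural identity \eqref{eq:residues-2}, which expresses $\cmn$ as ``comultiply the unit, then multiply one leg with the input'', so the mediating element always comes from the fixed lower set $\scomp \unt \cmn$, independent of $x$; the Frobenius condition then falls out of two applications of this identity with the associativity of $\mnd$ in between \eqref{eq:residues-frob-3}. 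To repair your proof you would need to derive an analogue of \eqref{eq:residues-2} from \eqref{eq:radj-res} --- which is, in effect, exactly the uniformization you are missing.
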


\begin{proof}
Given a Frobenius algebra $(A,\spmnd,\spcmn,\spunt,\spcun)$, the right residue operations can be defined as
\medskip
\beq\label{eq:residues}
\begin{split}
\newcommand{\xil}{\xi^{\Rr} = \Big((\xi\times A);\spmnd;\spcun\Big)}
\newcommand{\xir}{\upsilon^{\Lr} = \Big(A\times\upsilon);\spmnd;\spcun\Big)}
\newcommand{\pregr}{\scriptscriptstyle \spmnd}
\newcommand{\prunt}{\scriptscriptstyle \spcun}
\newcommand{\lugt}{\scriptstyle \xi}
\newcommand{\rugt}{\scriptstyle \upsilon}
\def\JPicScale{1.1}
\ifx\JPicScale\undefined\def\JPicScale{1}\fi
\psset{unit=\JPicScale mm}
\psset{linewidth=0.3,dotsep=1,hatchwidth=0.3,hatchsep=1.5,shadowsize=1,dimen=middle}
\psset{dotsize=0.7 2.5,dotscale=1 1,fillcolor=black}
\psset{arrowsize=1 2,arrowlength=1,arrowinset=0.25,tbarsize=0.7 5,bracketlength=0.15,rbracketlength=0.15}
\begin{pspicture}(0,0)(68.75,21.25)
\rput(5,11.25){$\pregr$}
\rput(5,21.25){$\xil$}
\rput(63.75,21.25){$\xir$}
\rput(67.5,5){$\rugt$}
\psline(8.75,7.5)(8.75,0)
\psline(1.25,7.5)(1.25,6.25)
\psline(1.25,7.5)(3.76,10)
\psline(6.26,10)(8.75,7.5)
\psline(5,13.75)(5,12.5)
\psline(3.76,12.5)(6.26,12.5)
\psline(3.76,10)(6.26,10)
\psline(6.26,12.5)(6.26,10)
\psline(3.76,12.5)(3.76,10)
\psline(2.5,6.25)(2.5,3.75)
\psline(0,6.25)(0,3.75)
\psline(0,6.25)(2.5,6.25)
\psline(0,3.75)(2.5,3.75)
\rput(1.25,5){$\lugt$}
\psline(6.25,16.25)(6.25,13.75)
\psline(3.75,13.75)(6.25,13.75)
\psline(3.75,16.25)(6.25,16.25)
\rput(5,15){$\prunt$}
\psline(3.75,16.25)(3.75,13.75)
\rput(63.75,11.25){$\pregr$}
\psline(60,7.5)(60,0)
\psline(67.5,7.5)(67.5,6.25)
\psline(60,7.5)(62.51,10)
\psline(65.01,10)(67.5,7.5)
\psline(63.75,13.75)(63.75,12.5)
\psline(62.51,12.5)(65.01,12.5)
\psline(62.51,10)(65.01,10)
\psline(65.01,12.5)(65.01,10)
\psline(62.51,12.5)(62.51,10)
\psline(68.75,6.25)(68.75,3.75)
\psline(66.25,6.25)(66.25,3.75)
\psline(66.25,6.25)(68.75,6.25)
\psline(66.25,3.75)(68.75,3.75)
\psline(65,16.25)(65,13.75)
\psline(62.5,13.75)(65,13.75)
\psline(62.5,16.25)(65,16.25)
\rput(63.75,15){$\prunt$}
\psline(62.5,16.25)(62.5,13.75)
\end{pspicture}

\end{split}
\eeq
The left-hand equivalence in \eqref{eq:radj-res} follows from the left-hand Frobenius condition in \eqref{eq:Frobspec-1}:
\beq\label{eq:residues-frob-1}
\begin{split}
\newcommand{\eex}{\scriptstyle x}
\newcommand{\eey}{\scriptstyle y}
\newcommand{\eev}{\scriptstyle z}
\newcommand{\prunt}{\scriptscriptstyle \spcun}
\newcommand{\lugt}{\scriptstyle \xi}
\newcommand{\fmnd}{\scriptscriptstyle \spmnd}
\newcommand{\fcmn}{\scriptscriptstyle \spcmn}
\def\JPicScale{1.1}
\ifx\JPicScale\undefined\def\JPicScale{1}\fi
\psset{unit=\JPicScale mm}
\psset{linewidth=0.3,dotsep=1,hatchwidth=0.3,hatchsep=1.5,shadowsize=1,dimen=middle}
\psset{dotsize=0.7 2.5,dotscale=1 1,fillcolor=black}
\psset{arrowsize=1 2,arrowlength=1,arrowinset=0.25,tbarsize=0.7 5,bracketlength=0.15,rbracketlength=0.15}
\begin{pspicture}(0,0)(63.75,18.75)
\psline(13.75,18.75)(13.75,11.25)
\psline(10,6.25)(10,0)
\psline(2.5,7.5)(2.5,6.25)
\psline(13.75,11.25)(11.25,8.75)
\psline(2.5,7.5)(5.01,10)
\psline(7.51,10)(8.75,8.75)
\psline(6.25,13.75)(6.25,12.5)
\rput[l](15,18.75){$\eev$}
\rput[r](9.38,0){$\eey$}
\psline(5.01,12.5)(7.51,12.5)
\psline(5.01,10)(7.51,10)
\psline(7.51,12.5)(7.51,10)
\psline(5.01,12.5)(5.01,10)
\psline(8.76,8.75)(11.26,8.75)
\psline(8.76,6.25)(11.26,6.25)
\psline(11.26,8.75)(11.26,6.25)
\psline(8.76,8.75)(8.76,6.25)
\psline(33.75,8.75)(33.75,6.25)
\psline(36.25,6.25)(38.75,3.75)
\psline(33.75,6.25)(31.25,3.75)
\psline(35,10)(35,8.75)
\psline(33.75,6.25)(36.25,6.25)
\psline(33.75,8.75)(36.25,8.75)
\psline(36.25,6.25)(36.25,8.75)
\psline(32.5,13.75)(33.75,12.5)
\psline(38.75,15)(36.25,12.5)
\psline(38.75,18.75)(38.75,15)
\psline(33.75,12.5)(33.75,10)
\psline(33.75,10)(36.25,10)
\psline(33.75,12.5)(36.25,12.5)
\psline(36.25,10)(36.25,12.5)
\psline(38.75,3.75)(38.75,0)
\rput[l](39.37,18.75){$\eev$}
\rput[r](38.13,0){$\eey$}
\rput(6.25,11.25){$\fmnd$}
\rput(35,11.25){$\fcmn$}
\rput(10,7.5){$\fcmn$}
\rput(35,7.5){$\fmnd$}
\psline(3.75,6.25)(3.75,3.75)
\psline(1.25,6.25)(1.25,3.75)
\psline(1.25,6.25)(3.75,6.25)
\psline(1.25,3.75)(3.75,3.75)
\rput(2.5,5){$\lugt$}
\psline(7.5,16.25)(7.5,13.75)
\psline(5,13.75)(7.5,13.75)
\psline(5,16.25)(7.5,16.25)
\rput(6.25,15){$\prunt$}
\psline(5,16.25)(5,13.75)
\psline(32.5,16.25)(32.5,13.75)
\psline(30,13.75)(32.5,13.75)
\psline(30,16.25)(32.5,16.25)
\rput(31.25,15){$\prunt$}
\psline(30,16.25)(30,13.75)
\psline(32.5,2.5)(32.5,0)
\psline(30,2.5)(30,0)
\psline(30,2.5)(32.5,2.5)
\psline(30,0)(32.5,0)
\rput(31.25,1.25){$\lugt$}
\psline(58.75,8.75)(58.75,6.25)
\psline(60,18.75)(60,8.75)
\psline(58.75,6.25)(61.25,6.25)
\psline(58.75,8.75)(61.25,8.75)
\psline(61.25,6.25)(61.25,8.75)
\rput[l](61.25,18.75){$\eev$}
\rput(60,7.5){$\fmnd$}
\rput(21.25,10){\EQLS}
\rput(48.75,10){\EQLS}
\psline(31.25,3.75)(31.25,2.5)
\psline(61.25,6.25)(63.75,3.75)
\psline(58.75,6.25)(56.25,3.75)
\psline(63.75,3.75)(63.75,0)
\rput[r](63.13,0){$\eey$}
\psline(57.5,2.5)(57.5,0)
\psline(55,2.5)(55,0)
\psline(55,2.5)(57.5,2.5)
\psline(55,0)(57.5,0)
\rput(56.25,1.25){$\lugt$}
\psline(56.25,3.75)(56.25,2.5)
\end{pspicture}

\end{split}
\eeq
The right-hand equivalence corresponds to the right-hand Frobenius condition. The equivalences in \eqref{eq:ladj-res} are $\ddag$-dual to \eqref{eq:radj-res}. For variation, we prove the converse by deriving the Frobenius condition from the left residual. Note that the defining conditions of the residuals say that the functions $\Rr,\Lr:\Up A\to \Do A$ and  $\Ll,\Rl:\Do A\to \Up A$ are surjective. For a given residuated monoid $(A,\mnd,\cmn,\unt,\cun)$, the left-hand of condition in \eqref{eq:ladj-res} thus implies:
\medskip
\beq\label{eq:residues-2}
\begin{split}
\newcommand{\fmnd}{\scriptstyle \therefore}
\newcommand{\fcmn}{\scriptstyle \because}
\newcommand{\funt}{\scriptstyle \unt}
\def\JPicScale{1.1}
\ifx\JPicScale\undefined\def\JPicScale{1}\fi
\psset{unit=\JPicScale mm}
\psset{linewidth=0.3,dotsep=1,hatchwidth=0.3,hatchsep=1.5,shadowsize=1,dimen=middle}
\psset{dotsize=0.7 2.5,dotscale=1 1,fillcolor=black}
\psset{arrowsize=1 2,arrowlength=1,arrowinset=0.25,tbarsize=0.7 5,bracketlength=0.15,rbracketlength=0.15}
\begin{pspicture}(0,0)(33.75,16.25)
\psline(0,16.25)(0,13.75)
\psline(3.75,8.75)(3.75,0)
\psline(0,13.75)(2.5,11.25)
\psline(5,11.25)(7.5,13.75)
\psline(7.5,16.25)(7.5,13.75)
\psline(2.5,11.25)(5,11.25)
\psline(2.5,8.75)(5,8.75)
\psline(5,11.25)(5,8.75)
\psline(2.5,11.25)(2.5,8.75)
\rput(3.75,10){$\fcmn$}
\psline(22.5,16.25)(22.5,11.25)
\psline(31.25,10)(33.75,7.5)
\psline(22.5,11.25)(25,8.75)
\psline(30,12.5)(30,16.25)
\psline(28.75,12.5)(31.25,12.5)
\psline(28.75,10)(31.25,10)
\psline(31.25,12.5)(31.25,10)
\psline(28.75,12.5)(28.75,10)
\psline(33.75,7.5)(33.75,0)
\psline(28.75,10)(27.5,8.75)
\psline(25,8.75)(27.5,8.75)
\psline(25,6.25)(27.5,6.25)
\rput(26.25,7.5){$\fcmn$}
\psline(27.5,8.75)(27.5,6.25)
\psline(25,8.75)(25,6.25)
\psline(26.25,6.25)(26.25,5)
\psline(25,5)(27.5,5)
\psline(25,2.5)(27.5,2.5)
\psline(27.5,5)(27.5,2.5)
\psline(25,5)(25,2.5)
\rput(30,11.25){$\fmnd$}
\rput(26.25,3.75){$\funt$}
\rput(15,8.75){\EQLS}
\end{pspicture}

\end{split}
\eeq
We use it at the first and at the last step, and in the middle the associativity:
\beq\label{eq:residues-frob-3}
\begin{split}
\newcommand{\funt}{\scriptstyle \unt}
\newcommand{\fmnd}{\scriptstyle \therefore}
\newcommand{\fcmn}{\scriptstyle \because}
\def\JPicScale{1.1}
\ifx\JPicScale\undefined\def\JPicScale{1}\fi
\psset{unit=\JPicScale mm}
\psset{linewidth=0.3,dotsep=1,hatchwidth=0.3,hatchsep=1.5,shadowsize=1,dimen=middle}
\psset{dotsize=0.7 2.5,dotscale=1 1,fillcolor=black}
\psset{arrowsize=1 2,arrowlength=1,arrowinset=0.25,tbarsize=0.7 5,bracketlength=0.15,rbracketlength=0.15}
\begin{pspicture}(0,0)(93.75,20)
\psline(88.75,8.75)(88.75,6.25)
\psline(91.25,6.25)(93.75,3.75)
\psline(88.75,6.25)(86.25,3.75)
\psline(90,11.25)(90,8.75)
\psline(88.75,6.25)(91.25,6.25)
\psline(88.75,8.75)(91.25,8.75)
\psline(91.25,6.25)(91.25,8.75)
\psline(86.25,16.25)(88.75,13.75)
\psline(93.75,16.25)(91.25,13.75)
\psline(86.25,20)(86.25,16.25)
\psline(93.75,20)(93.75,16.25)
\psline(88.75,13.75)(88.75,11.25)
\psline(88.75,11.25)(91.25,11.25)
\psline(88.75,13.75)(91.25,13.75)
\psline(91.25,11.25)(91.25,13.75)
\psline(86.25,3.75)(86.25,0)
\psline(93.75,3.75)(93.75,0)
\psline(0,20)(0,12.5)
\psline(5,6.25)(5,0)
\psline(15,7.5)(15,0)
\psline(0,12.5)(3.75,8.75)
\psline(6.25,8.75)(8.75,11.25)
\psline(11.25,11.25)(15,7.5)
\psline(10,20)(9.99,13.75)
\psline(8.75,13.75)(11.25,13.75)
\psline(8.75,11.25)(11.25,11.25)
\psline(11.25,13.75)(11.25,11.25)
\psline(8.75,13.75)(8.75,11.25)
\psline(3.75,8.75)(6.25,8.75)
\psline(3.75,6.25)(6.25,6.25)
\psline(6.25,8.75)(6.25,6.25)
\psline(3.75,8.75)(3.75,6.25)
\rput(5,7.5){$\fcmn$}
\rput(90,12.5){$\fcmn$}
\rput(90,7.5){$\fmnd$}
\rput(10,12.5){$\fmnd$}
\psline(26.25,20)(26.25,10)
\psline(36.25,7.5)(38.75,5)
\psline(43.75,7.5)(43.75,0)
\psline(26.25,10)(30,6.25)
\psline(36.25,10)(37.5,11.25)
\psline(40,11.25)(43.75,7.5)
\psline(38.75,20)(38.74,13.75)
\psline(37.5,13.75)(40,13.75)
\psline(37.5,11.25)(40,11.25)
\psline(40,13.75)(40,11.25)
\psline(37.5,13.75)(37.5,11.25)
\psline(33.75,10)(36.25,10)
\psline(33.75,7.5)(36.25,7.5)
\psline(36.25,10)(36.25,7.5)
\psline(33.75,10)(33.75,7.5)
\rput(38.75,12.5){$\fmnd$}
\psline(38.75,5)(38.75,0)
\psline(33.75,7.5)(32.5,6.25)
\psline(30,6.25)(32.5,6.25)
\psline(30,3.75)(32.5,3.75)
\rput(31.25,5){$\fcmn$}
\psline(32.5,6.25)(32.5,3.75)
\psline(30,6.25)(30,3.75)
\psline(31.25,3.75)(31.25,2.5)
\psline(30,2.5)(32.5,2.5)
\psline(30,0)(32.5,0)
\psline(32.5,2.5)(32.5,0)
\psline(30,2.5)(30,0)
\psline(55,20)(55,10)
\psline(72.5,7.5)(75,5)
\psline(55,10)(58.75,6.25)
\psline(70,10)(68.75,11.25)
\psline(67.5,20)(67.49,13.75)
\psline(66.25,13.75)(68.75,13.75)
\psline(66.25,11.25)(68.75,11.25)
\psline(68.75,13.75)(68.75,11.25)
\psline(66.25,13.75)(66.25,11.25)
\psline(70,10)(72.5,10)
\psline(70,7.5)(72.5,7.5)
\psline(72.5,10)(72.5,7.5)
\psline(70,10)(70,7.5)
\rput(67.5,12.5){$\fmnd$}
\psline(75,5)(75,0)
\psline(70,7.5)(67.5,5)
\psline(58.75,6.25)(61.25,6.25)
\psline(58.75,3.75)(61.25,3.75)
\rput(60,5){$\fcmn$}
\psline(61.25,6.25)(61.25,3.75)
\psline(58.75,6.25)(58.75,3.75)
\psline(60,3.75)(60,2.5)
\psline(58.75,2.5)(61.25,2.5)
\psline(58.75,0)(61.25,0)
\psline(61.25,2.5)(61.25,0)
\psline(58.75,2.5)(58.75,0)
\psline(66.25,11.25)(61.25,6.25)
\psline(67.5,5)(67.5,0)
\rput(35,8.75){$\fmnd$}
\rput(71.25,8.75){$\fmnd$}
\rput(60,1.25){$\funt$}
\rput(31.25,1.25){$\funt$}
\rput(20,10){\EQLS}
\rput(48.75,10){\EQLS}
\rput(80,10){\EQLS}
\end{pspicture}

\end{split}
\eeq
The second Frobenius law follows  symmetrically from the second residuation condition.
\end{proof}

\paragraph{\sf Examples.} Genuinely prelational, non-representable monoids $\MMM$ and $\LLL$ were described in Sec.~\ref{Sec:mon}. To get a residuated prelational monoid, expand the underlying set $\MMM$ of finite multisets of natural numbers $\NNn\to \NNn$ to the set $\ZZZ$ of finite multisets of integers, presented as finitely supported functions $\ZZz\to \NNn$, with the same preorder. The difference between the two examples provides an interesting view of the Frobenius decomposition of the sequents in the form $\derr {x,y} {u,v}$ into $\left({x,y}\leftarrow {x,s,v}\leftarrow {u,v}\right)$ and $\left({x,y}\leftarrow {u,t,y}\leftarrow {u,v}\right)$. The preorder $\widetilde \ZZZ$ of finite \emph{signed}\/ multisets of natural numbers, presented by finitely supported functions $\NNn\to \ZZz$, provides a slightly different situation, and another opportunity for an exercise in sequent algebra. The monoid $\LLL$ based on shuffling words, also described in Sec.~\ref{Sec:mon}, provides a non-example, as the residuation operations cannot be adjoined to it conservatively. 

\section{Spiders are unions of pregroups
}\label{Sec:unions}

\subsection{Unions}
A union $\bigcup \JJJ$ of a family of sets $\JJJ\subseteq \WP A$ is its supremum in the lattice of subsets $\WP A$. The concept of union extends naturally to a large class of categories \cite{BarrM:union}. It also extends from lattices of subsets to lattices of subalgebras. Lattice theory was originally developed to describe such extensions \cite{BirkhoffG:lattice}. If each element of $\JJJ$  carries some structure, say of a group, then their union will also carry that  structure, provided that they are \emph{consistent}\/ in a suitable sense. The consistency is usually equivalent to the requirement that any two structures in the given family are contained in a joint superstructure. The union is then the filtered supremum in the lattice of substructures. If a putative structure is a group, then the consistency requirement implies that all members of the family of groups $\JJJ$ share the same group unit. If they don't, then their union will not be a group, but a suitable generalization.  When the putative structure is not a group, but a pregroup, then the suitable generalization of the concept of pregroup is the concept of spider algebra. Our claim in this section is that the spider algebras of prelations are the unions of pregroups that are consistent, but not necessarily representable. Pregroups are monoids in the category $\Preord$ of preorders and monotone maps, but their unions are monoids in the category $\PRel$ of preorders and prelations, and the unions lift the property of being a pregroup into the property of being a spider. 

\smallskip
Just like the unions of sets are taken in a supremum completion $\WP A$, which is a lattice of subsets, the unions of preorders are taken in a supremum completion $\Do A$, which is the lattice of lower sets from Sec.~\ref{Sec:Completions}. For any family $\JJJ\subseteq \WP A$ of subpreorders of $A$ we define:
\bea
\bigcup \JJJ & = & \bigcup_{x \in X \in \JJJ} \downarrow x
\eea
To specify how $\bigcup \JJJ$ unifies the pregroup structures carried by the elements of $\JJJ$, we need to specify a notion of consistency. It seems simplest to specify it using pregroup representations.

\subsection{Cayley representation of monoids}
Any monoid $(A,\spmnd,\spunt)$ in any monoidal category $\CCC$ can be represented in the monoid of endomorphisms over its carrier along the embeddings $\ltimes, \rtimes \colon  \CCC(A)  \to \CCC(A,A)$, defined
\medskip
\beq\label{eq:cayley}
\begin{split}
\newcommand{\xil}{a^{\ltimes}(x) = a\cdot x}
\newcommand{\xir}{a^{\rtimes}(x) = x\cdot a}
\newcommand{\pregr}{\scriptstyle \therefore}
\newcommand{\lugt}{\scriptstyle a}
\def\JPicScale{1.1}
\ifx\JPicScale\undefined\def\JPicScale{1}\fi
\psset{unit=\JPicScale mm}
\psset{linewidth=0.3,dotsep=1,hatchwidth=0.3,hatchsep=1.5,shadowsize=1,dimen=middle}
\psset{dotsize=0.7 2.5,dotscale=1 1,fillcolor=black}
\psset{arrowsize=1 2,arrowlength=1,arrowinset=0.25,tbarsize=0.7 5,bracketlength=0.15,rbracketlength=0.15}
\begin{pspicture}(0,0)(48.75,21.25)
\rput(5,11.25){$\pregr$}
\rput(5,21.25){$\xil$}
\rput(43.75,21.25){$\xir$}
\psline(8.75,7.5)(8.75,0)
\psline(1.25,7.5)(1.25,6.25)
\psline(1.25,7.5)(3.76,10)
\psline(6.26,10)(8.75,7.5)
\psline(5,17.5)(5,12.5)
\psline(3.76,12.5)(6.26,12.5)
\psline(3.76,10)(6.26,10)
\psline(6.26,12.5)(6.26,10)
\psline(3.76,12.5)(3.76,10)
\psline(2.5,6.25)(2.5,3.75)
\psline(0,6.25)(0,3.75)
\psline(0,6.25)(2.5,6.25)
\psline(0,3.75)(2.5,3.75)
\rput(1.25,5){$\lugt$}
\rput(43.75,11.25){$\pregr$}
\psline(40,7.5)(40,0)
\psline(47.5,7.5)(47.5,6.25)
\psline(40,7.5)(42.51,10)
\psline(45.01,10)(47.5,7.5)
\psline(43.75,17.5)(43.75,12.5)
\psline(42.51,12.5)(45.01,12.5)
\psline(42.51,10)(45.01,10)
\psline(45.01,12.5)(45.01,10)
\psline(42.51,12.5)(42.51,10)
\psline(48.75,6.25)(48.75,3.75)
\psline(46.25,6.25)(46.25,3.75)
\psline(46.25,6.25)(48.75,6.25)
\psline(46.25,3.75)(48.75,3.75)
\rput(47.5,5){$\lugt$}
\end{pspicture}

\end{split}
\eeq
where the elements $\CCC(A)$ are the morphisms from the monoidal unit. The embeddings in \eqref{eq:cayley} are the monoid part of Cayley's group representation, and also a special case of the Yoneda embeddings. An endomorphism $f\in \CCC(A,A)$ is a representative in the image of $\rtimes$ if and only if $f(x\cdot y) = \scomp {f(x)} y$ and in the image of $\ltimes$ if and only if $f(x\cdot y) = \scomp y {f(x)}$. An inverse image, \emph{viz}\/ a represented element, can be obtained as $f(\spunt)$ in both cases.  The associativity of the monoid assures that $(a\cdot b)^{\rtimes} = \scomp {a^\ltimes} {b^\ltimes}$, $(a\cdot b)^{\ltimes} = \scomp {b^\ltimes} {a^\ltimes}$,  and $\spunt^\ltimes = \spunt^\rtimes = \id$ make both embeddings into monoid homomorphisms. When $\CCC$ is enriched, the monoid enrichment is preserved by definition.

\smallskip
How about the duality? When $\CCC$ has an identity-on-the-objects monoidal duality $\ddag:\CCC^{op}\to \CCC$, then the Frobenius conditions assure that the monoid operations $L, R: \CCC(A)\to \CCC(A)$, defined
\medskip
\beq\label{eq:star}
\begin{split}
\newcommand{\xil}{a^L = \left(\spunt \, ; \spcmn\, ;(a^\ddag \times A) \right)}
\newcommand{\xir}{a^R = \left(\spunt \, ; \spcmn\, ;(A\times a^\ddag) \right)}
\newcommand{\pregr}{\scriptstyle \because}
\newcommand{\lugt}{\raisebox{.25ex}{$\scriptscriptstyle a^\ddag$}}
\newcommand{\srce}{\scriptstyle \spunt}
\def\JPicScale{1.1}
\ifx\JPicScale\undefined\def\JPicScale{1}\fi
\psset{unit=\JPicScale mm}
\psset{linewidth=0.3,dotsep=1,hatchwidth=0.3,hatchsep=1.5,shadowsize=1,dimen=middle}
\psset{dotsize=0.7 2.5,dotscale=1 1,fillcolor=black}
\psset{arrowsize=1 2,arrowlength=1,arrowinset=0.25,tbarsize=0.7 5,bracketlength=0.15,rbracketlength=0.15}
\begin{pspicture}(0,0)(50,21.25)
\rput(5,5){$\pregr$}
\rput(5,21.25){$\xil$}
\rput(43.75,21.25){$\xir$}
\psline(8.75,16.25)(8.75,8.75)
\psline(6.25,6.25)(8.76,8.75)
\psline(1.26,8.75)(3.75,6.25)
\psline(5,3.75)(5,2.5)
\psline(3.76,6.25)(6.26,6.25)
\psline(3.76,3.75)(6.26,3.75)
\psline(6.26,6.25)(6.26,3.75)
\psline(3.76,6.25)(3.76,3.75)
\psline(2.5,12.5)(2.5,10)
\psline(0,12.5)(0,10)
\psline(0,12.5)(2.5,12.5)
\psline(0,10)(2.5,10)
\rput(1.25,11.25){$\lugt$}
\psline(1.25,10)(1.25,8.75)
\psline(6.25,2.5)(6.25,0)
\psline(3.75,2.5)(3.75,0)
\psline(3.75,2.5)(6.25,2.5)
\psline(3.75,0)(6.25,0)
\rput(5,1.25){$\srce$}
\rput(45,5){$\pregr$}
\psline(41.25,16.25)(41.25,8.75)
\psline(46.25,6.25)(48.76,8.75)
\psline(41.26,8.75)(43.75,6.25)
\psline(45,3.75)(45,2.5)
\psline(43.76,6.25)(46.26,6.25)
\psline(43.76,3.75)(46.26,3.75)
\psline(46.26,6.25)(46.26,3.75)
\psline(43.76,6.25)(43.76,3.75)
\psline(50,12.5)(50,10)
\psline(47.5,12.5)(47.5,10)
\psline(47.5,12.5)(50,12.5)
\psline(47.5,10)(50,10)
\rput(48.75,11.25){$\lugt$}
\psline(48.75,10)(48.75,8.75)
\psline(46.25,2.5)(46.25,0)
\psline(43.75,2.5)(43.75,0)
\psline(43.75,2.5)(46.25,2.5)
\psline(43.75,0)(46.25,0)
\rput(45,1.25){$\srce$}
\end{pspicture}

\end{split}
\eeq
induce dualities on the monoid $A$, and that these dualities are preserved under the representations, in the sense 
\beq
\left(a^L\right)^\ltimes = \left(a^\ltimes\right)^\ddag \qquad \qquad \qquad \qquad \left(a^R\right)^\rtimes = \left(a^\rtimes\right)^\ddag
\eeq
This is proved by applying a Frobenius condition on each side. The converse, that such dualities also imply the Frobenius conditions, is in \cite[Thm.~4.3]{PavlovicD:Qabs12}. The embeddings of self-dual monoids into the monoids of endomorphisms in self-dual categories were further discussed and exploited in \cite{PavlovicD:MSCS13,PavlovicD:Qabs12}. 

\smallskip

Instantiating to $\CCC = \PRel$, the embeddings become $\ltimes, \rtimes \colon  \Do A  \to \Do(A^o\times A)$, since  $\PRel(A) = \Do A$ and $\PRel(A,A) = \Do(A^o\times A)$. If we are given a prelational monoid $(A,\spmnd,\spunt)$, the monoid unit $\spunt \in \Do A$ thus induces representations $\spunt^{\ltimes}, \spunt^{\rtimes}\in  \Do(A^o\times A)$ which both correspond to the identity prelation $\id_{A}$, which is just the preordering of $A$
\[
\derrr x {\spunt^{\ltimes}} y \ \ \iff \ \ \derrr x A y \ \ \iff \ \ \derrr x{\spunt^{\rtimes}} y
\]
It is easy to see that any lower set $\upsilon \subseteq \spunt$ induces representations $\upsilon^\ltimes, \upsilon^\rtimes \subseteq \id_A$ which determine a subpreorder $A_\upsilon \subseteq A$ such that $\upsilon^\ltimes=\upsilon^\rtimes = \id_{A_\upsilon}$.

\subsection{Consistency of spiders}
If we are not given  a prelational monoid on $A$, but two prelational monoids $(A_{0},\spmnd_{0},\spunt_{0})$ and $(A_{1},\spmnd_{1},\spunt_{1})$ for $A_{0}, A_{1} \subseteq A$ then $\spunt_{0}^{\ltimes_{0}} = \id_{A_{0}}$ and $\spunt_{1}^{\ltimes_{1}} = \id_{A_1}$. We say that the monoids $A_{0}$ and $A_{1}$ are \emph{consistent}\/ if for every $a\in A_{0}\cap A_{1}$ holds $a^{\ltimes_{0}} = a^{\ltimes_{1}}$.

\smallskip
A \emph{monoid covering}\/ of a preorder $A$ is
\begin{itemize}
\item a family of preorders $\JJJ \subseteq \Do A$, written in the form $\JJJ = \{A_{i}\}_{i\in J}$, such that $\bigcup \JJJ = A$,
\item a family of monoids $(A_{i}, \spmnd_i, \spunt_i)$ such that for all $i,j\in I$ the monoids $A_i$ and $A_j$ consistent, i.e. 
\bea
\derrr {a,x} {\spmnd_i} y & \iff & \derrr {a,x} {\spmnd_j} y
\eea
holds for every $a\in A_i\cap A_j$ and all $x,y\in A$.
\end{itemize}
 
%
%

\begin{theorem}\label{thm:three}
A preorder is a spider if and only if it has a pregroup covering.
\end{theorem}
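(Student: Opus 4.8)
The plan is to glue and un-glue the spider structure along the principal pieces of the unit, using Theorem~\ref{thm:one} as the bridge between a \emph{pointed} piece and a pregroup.

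For the direction from a pregroup covering to a spider, suppose $\JJJ=\{A_i\}_{i\in J}$ is a monoid covering of $A$ in which each $(A_i,\spmnd_i,\spunt_i)$ is a pregroup, hence by Theorem~\ref{thm:one} a pointed spider with comultiplication $\spcmn_i$ and counit $\spcun_i$. I would assemble the union structure on $A=\bigcup\JJJ$ by gluing all four operations, $\spmnd=\bigcup_i\spmnd_i$, $\spcmn=\bigcup_i\spcmn_i$, $\spunt=\bigcup_i\spunt_i$, $\spcun=\bigcup_i\spcun_i$, as lower sets. The consistency requirement is exactly what forces these relations to agree on the overlaps $A_i\cap A_j$, so that the glued $\spmnd$ is a well-defined associative prelation with unit $\spunt$, i.e.\ a monoid in $\PRel$. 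To see that it is a spider, observe that the premise of each spider law interlocks its data through a single mediating element $z$; since $z$ lies in some $A_i\cap A_j$, consistency lets me verify that instance inside one pregroup, where the adjoints produce the Frobenius witness $s=x^\ell u$ (and dually $t=vy^r$) exactly as in the first half of the proof of Theorem~\ref{thm:one}, and where the special condition \eqref{eq:Frobspec-2} holds because pregroups are spiders. The resulting union is in general \emph{not} pointed, because $\spunt=\bigcup_i\spunt_i$ need not be a principal lower set; this is precisely the room that lets a family of pregroups with distinct units assemble into a genuine, non-pointed spider.

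For the converse, let $(A,\spmnd,\spcmn,\spunt,\spcun)$ be a spider. Its unit is a lower set $\spunt\in\Do A$, and every lower set is the union of the principal lower sets of its elements, $\spunt=\bigcup_{e\in\spunt}\downarrow e$. For each $e\in\spunt$ I would use the Cayley embedding \eqref{eq:cayley} to cut out the piece $A_e\subseteq A$ determined, as in Section~\ref{Sec:unions}, by $(\downarrow e)^\ltimes=(\downarrow e)^\rtimes=\id_{A_e}$; concretely $A_e$ consists of the elements on which $e$ acts as a two-sided unit up to preorder equivalence. Associativity makes $A_e$ closed under $\spmnd$, and since $e$ is an honest element it represents a principal lower set, so $(A_e,\spmnd,e)$ is \emph{pointed} in the sense of \eqref{eq:untmap}. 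The Frobenius and special conditions \eqref{eq:Frobspec-1}--\eqref{eq:Frobspec-2} restrict to $A_e$ once one checks that their witnesses can be pushed into $A_e$ by acting with the local unit $e$; granting this, Theorem~\ref{thm:one} identifies each $(A_e,\spmnd,e)$ as a pregroup.

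It then remains to check that $\{A_e\}_{e\in\spunt}$ is a pregroup covering. The covering identity $\bigcup_{e}A_e=A$ follows from the unit law: for any $x\in A$, instantiating $\derrr{\spunt,x}\spmnd y\iff\derrr x A y$ at $y=x$ produces some $e\in\spunt$ with $\derrr{e,x}\spmnd x$, which places $x$ in $A_e$. Consistency of the family is automatic, since every $\spmnd_e$ is a restriction of the single global $\spmnd$, so on any overlap $A_e\cap A_{e'}$ the Cayley representatives $a^{\ltimes_e}$ and $a^{\ltimes_{e'}}$ both coincide with the global $a^\ltimes$. The main obstacle is the forward direction, and within it the two localization steps: extracting honest principal local units from the possibly non-principal unit lower set $\spunt$ so that each slice $A_e$ is genuinely pointed and closed, and showing that the spider witnesses for elements of $A_e$ can be relocated inside $A_e$. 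This is exactly the point at which the preordered, non-representable setting diverges from the relational one, where the analogous slices form a disjoint partition into groups rather than an overlapping cover by pregroups.
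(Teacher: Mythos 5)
Your two directions follow the same overall architecture as the paper's proof --- Cayley slices $A_e$ indexed by $e\in\spunt$ with Thm.~\ref{thm:one} as the bridge in one direction, gluing by unions with consistency in the other --- but at both of the genuinely hard points, where the Frobenius and special conditions (\ref{eq:Frobspec-1}--\ref{eq:Frobspec-2}) themselves must be transported, you manipulate those conditions directly, and this is where the proposal breaks. The paper never does this: its key move, in both directions, is to pivot from the Frobenius conditions to the duality operations $a^L,a^R$ of \eqref{eq:star}, which restrict to a slice $A_t$ and are consistent across a union by a one-line computation of the same kind as \eqref{eq:closed}, and then to invoke the equivalence of such dualities with the Frobenius condition \cite[Thm.~4.3]{PavlovicD:Qabs12} and with specialty \cite[Lemma~4.8]{PavlovicD:Qabs12}. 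In your spider-to-covering direction, the step you explicitly leave open (``granting this,'' i.e.\ pushing the Frobenius witnesses into $A_e$) is exactly the step the paper's pivot is designed to avoid; flagging it does not fill it.

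In the covering-to-spider direction the gap is an actual error. You claim each instance of a spider law can be verified inside one pregroup because the mediating element $z$ lies in $A_i\cap A_j$. It need not: $\derrr{x,y}{\spmnd_i}{z}$ puts $z$ below the product $x\cdot_i y$, so lower-closedness of $A_i$ does give $z\in A_i$, but $\derrr{z}{\spcmn_j}{u,v}$ only puts $z$ \emph{above} $u\cdot_j v\in A_j$, and lower sets are not upward closed, so $z$ may lie outside $A_j$. Even granting $z$ in the overlap, the configuration $x,y\in A_i$, $u,v\in A_j$ still spans two components, and your witness $s=x^\ell u$ is a product of $x^\ell\in A_i$ with $u\in A_j$: the union monoid $\bigcup_i\spmnd_i$ in general has no entry relating such a cross-component pair, so $\derrr{x,s}{\spmnd}{u}$ cannot be certified; consistency is of no help here, since it only constrains products whose one factor lies in the overlap. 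A further unaddressed point: the paper's spiders are \emph{dual} monoid-comonoid couples, $\spcmn=\spmnd^\ddag$, and since $\ddag$ is antitone it turns unions into intersection-like operations, so the comonoid you glue, $\bigcup_i\spcmn_i$, is not automatically the dual of the monoid $\bigcup_i\spmnd_i$ --- even the structure for which you would verify the spider laws is not correctly identified. The route through $L$, $R$ and \cite{PavlovicD:Qabs12} is what lets the paper bypass all three problems at once.
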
 

\begin{proof}
For any monoid $(A,\spmnd,\spunt)$ in $\PRel$, the representative $\spunt^{\ltimes}:A\prel A$ is the identity. Viewing $\spunt:\OOne\prel A$ as the lower set $\spunt\in \Do A$, every $t\in \spunt$ also induces a prelation $t:\OOne \prel A$ corresponding to the principal lower set $\downarrow t\in \Do A$. The representative $t^{\ltimes}:A\prel A$ satisfies $t^{\ltimes }\subseteq \spunt^{\ltimes}=\id_{A}$. There is thus $A_{t}\subseteq A$ such that $t^{\ltimes } = \id_{A_{t}}$. By definition $x \in A$ is in $A_t$ if and only if $x^\ltimes = \scomp {x^\ltimes} {t^\ltimes} = \scomp {t^\ltimes} {x^\ltimes}$. For $x,y\in A_t$ it follows that 
\beq \label{eq:closed}
\scomp {x^\ltimes} {y^\ltimes} \ = \ \left( {t^\ltimes} \, ; {x^\ltimes}\, ; {y^\ltimes}\right)\  =\  \left( {x^\ltimes} \, ; {t^\ltimes}\, ; {y^\ltimes}\right)\  =\    \left( {x^\ltimes} \, ; {y^\ltimes}\, ; {t^\ltimes}\right)
\eeq
which means that $A_t$ is closed under the monoid operation in $A$. Denoting the restriction of $\spmnd$ to $A_t$ by $\spmnd_{t}$ yields the submonoid $(A_t,\spmnd_t, t)$ of $(A,\spmnd,\spunt)$. If the latter monoid is a spider, then it has the dualities from \eqref{eq:star}, which are inherited by $A_t$ by reasoning similar to \eqref{eq:closed}. It follows that the monoid $A_t$ is also a spider, since conditions (\ref{eq:Frobspec-1}--\ref{eq:Frobspec-2}) follow from the dualities. But  $A_{t}$ is a pointed monoid, with the unit represented by $t$. As a pointed spider, $A_{t}$ is thus a pregroup, by Thm.~\ref{thm:one}. Since $t$ was taken as an arbitrary element of $\spunt$, and $\id_A = \spunt^{\ltimes} = \bigcup_{t\in \spunt} t^{\ltimes} = \bigcup_{t\in \spunt} \id_{A_t}$, it follows that
\bear
A & = & \bigcup_{t\in \spunt} A_{t}
\eear
where all $A_{t}$ are pregroups. The consistency follows from the fact that for all $t\in \spunt$, the monoid operations $\spmnd_t$ on $A_t$ are restrictions of $\spmnd$ on $A$, and hence $x^{\ltimes_t}= x^{\ltimes}$.  We have thus constructed a pregroup cover of an arbitrary spider.

\smallskip
The other way around, let $\JJJ = \{(A_i,\mnd_i, \unt_i)\}_{i\in J}$ be a consistent family of pregroups over subpreorders of $A=\bigcup_{i\in J} A_i$. For representable monoids, the consistency boils down to the requirement that all $a,x \in A_i \cap A_j$ satisfy\footnote{We lift the notation $a\cdot x$ for the representive $\derrr {a\cdot x}{A} y$ of $\derrr{a,x}{\mnd} y$ from $A$ to $A_i$.} $a\cdot_i x = a \cdot_j x$. The consistency thus implies that $\mnd = \bigcup_{i\in J} \mnd_i$, together with $\unt = \bigcup_{i \in J}$ yields a monoid structure $(A,\mnd,\unt)$, since the monoid conditions lift from the components to the union. It remains to be proved that the monoid  is a spider. Thm.~\ref{thm:one} assured that each of the pregroups $A_i$ is a spider. The fact that the consistency assures that the spider conditions also lift from the components to the union can also be checked directly, but less directly than the monoid conditions, because it involves the dual comonoids. A shorter path is to show that the consistency of the monoids implies the consistency of the duality operations $L_i,R_i$ from \eqref{eq:star} for all $i\in J$, which is straightforward. The union then produces the duality operations $L,R$ on the monoid $(A,\mnd,\unt)$, which are equivalent to the Frobenius conditions by \cite[Thm.~4.3]{PavlovicD:Qabs12}, and the specialty condition by  \cite[Lemma.~4.8]{PavlovicD:Qabs12}. This completes the construction of the spider $A$ as a union of a consistent family of pregroups $\JJJ$.
\end{proof}

\paragraph{\sf Pregroupoids?} Restricted to \emph{symmetric}\/ preorders, the unions in Thm.~\ref{thm:three} become disjoint, pregroups boil down to groups, and the result essentially boils down to \cite{PavlovicD:QI09}, where the abelian spiders in the category $\Rel$ of sets and relations were characterized as the disjoint unions of abelian groups. That result has at times been restated in terms of groupoids instead of the disjoint unions of groups. Formally, the two frameworks are equivalent, in the sense that any disjoint union of groups is a groupoid, and the skeleton of any groupoid is a disjoint union of groups, equivalent to it. In practice, however, groupoids are often significantly more flexible than the disjoint unions of groups. In homotopy theory, this was compellingly argued a while ago  \cite{BrownR:groupoids,HigginsPJ:book,Esquisse}. Reducing fundamental groupoids to fundamental groups, or splitting the quotient functors from categories to their skeletons in general, requires choosing base points, which can be arbitrarily hard. It is therefore important to note that the decompositions of spider algebras into disjoint unions of groups  in $\Rel$ \cite{PavlovicD:QI09}, and into unions of pregroups  in $\PRel$ here, \emph{come with canonical base points}, which are given by the units as the canonical base points. Omitting the base points and stating the results in terms of groupoids substantially weakens the original claims. The descriptions in terms of unions are less flexible precisely because they are more informative. On the other hand, the flexibility gained by weakening the described spider decompositions allows capturing a different correspondence, aligning the Frobenius law with invertible partial compositions \cite{HeunenC:frob-groupoid}. Lifting this idea to the current framework suggests an interesting generalization of the pregroup unions to suitable fibrations, which must be left for future work.  
%
%
%
%

\section{Summary and further work}\label{Sec:outro}

\subsubsection*{[Allergy Warning: The following paragraphs may contain traces of philosophy.]}

This is yet another paper about adjunctions. Lawvere showed that logical operations arise from adjunctions  \cite{LawvereFW:dialectica}. The reason is that the logical introduction and elimination rules can be specified as sequent adjunctions. Lambek showed that substantial fragments of the syntax of natural languages can be reconstructed in terms of adjunctions \cite{LambekJ:word-sentence}. The reason is that the sequents used to analyze syntax can be reduced to contractions along the counits of adjoint pairs of syntactic types.\footnote{Since the adjunction units are never used in syntactic analyses, the question whether the actual adjunctions are really needed has been raised from the outset. In \cite{LambekJ:grammar97}, pregroups were introduced only after an extended discussion about \emph{"protogroups"}, which are partially ordered monoids with left and right "protoinverses" $x^\ell$ and $x^r$ equipped with contractions $\derr \unt {x^\ell x}$ and $\derr \unt {xx^r}$, but no expansions that would make them adjoint to $x$. The structure of a protogroup without the equation $x^{\ell r} = x^{r\ell}=x$ imposed is not intrinsic to a monoid in the sense of Sec~\ref{Sec:property}. Adding such protoinverses to a monoid obliterates any protoinverses, or any adjoints, that might have existed in it previously, and proliferates spurious types and contractions. On the other hand, a protogroup satisfying the equation $x^{\ell r} = x^{r\ell}$ is a pregroup, with $\derr {x x^\ell} \unt  = \ell\derr\unt {xx^r} $ and $\derr {x^r x} \unt  = r\derr  \unt{x^\ell x}$. Maybe the observation that only the adjunction counits are used is in the eye of the observer? Maybe we use the units without noticing? Maybe the symmetry of the adjunction structure echoes the symmetry of communication: the counits are used to analyze sentences, the units to synthesize them?} Ongoing work suggests that data analysis, concept mining, and even communication in general evolve as adjunctions \cite{PavlovicD:SemSp21,PavlovicD:CALCO15,PavlovicD:ICFCA12,PavlovicD:nucleus}. What is the impact of the presented results on these observations? 

\smallskip
The conceptual impact is that there is a sequent-algebraic connection between
\begin{enumerate}[a)]
\item the pregroup reductions ${x x^\ell} \leftarrow\, \unt \leftarrow {x^\ell  x}$ and  ${x^r  x} \leftarrow\, \unt \leftarrow {x x^r}$ and
\item the spider decompositions of $\derr {xy} {uv}$ into $\left({xy}\leftarrow {xsv}\leftarrow {uv}\right)$ and  $\left({xy}\leftarrow {uty}\leftarrow {uv}\right)$. 
\end{enumerate}
The syntactic processes depicted in (a) open and close the brackets of meaning, as noted by Husserl \cite[Logical Investigation IV]{HusserlE:shorter} and formalized by Ajdukiewicz \cite{Ajdukiewicz}. The formalism distinguished $x^\ell$, as a charge of a later reference $x$, from $x^r$, as a discharge of an earlier $x$. Closing such "connexions" by reducing them to $\unt$ set the well-formed sentences apart from the background. Processes (b) allow factoring general syntactic reductions into single-component reductions. The algebraic equivalence of the two different presentations of the syntactic process does seem to have repercussions on the original epistemological analyses of language, and possibly on the  present computational applications, but they are far from clear at present.

\smallskip
The technical impact of the presented results is clearer, but they only provide an algebraic stepping stone into a problem area that opens in many directions. In \cite[Ch.~28]{LambekJ:word-sentence}, Lambek discussed the limitations of pregroups as a context-insensitive model, and the paths forward. In the meantime, the advances in the pragmatics of natural language processing by neural nets broadened the scope of the concept of context. A context is not a mere interface between syntax and semantics. In the architecture of fast-learning nets, a context is more fruitfully interpreted as a \emph{process of selecting a syntax suitable for a given source}. The narratives are parsed at several levels, with many layers of syntax. The grammars applicable at lower levels are determined by higher-level contexts. The source streams the contextual information. This communication architecture is echoed in the presented sequent-algebra model as the incremental narrowing of the monoid unit from sequent to sequent, selecting within the lower set that plays the role of the spider-algebra unit an element that plays the role of a pregroup unit and enables a syntactic reduction of the given phrase. This informal description of the process is offered as an indication of the research tasks ahead.

\smallskip
Leaving aside the presented results and their interpretations, the alignment of the general framework of prelational sequent algebra with the DisCoCat program seems to suggest a possible link between the functorial vector space semantics  \cite{CoeckeB:APAL13} and the task of understanding the monoidal functorial semantics in general \cite{PavlovicD:FunSem17}. If the characterization of the relational spider algebras as groups broadened the view of the basis sets beyond the linear spaces, uncovering groups as resources within the basis elements, then the characterization of the perordered spider algebras as pregroups refines the view of that resource beyond groups. Initially, the spider algebras over vector spaces provided an element-free view of the \emph{linear decompositions}\/ over the basis sets \cite{PavlovicD:MSCS13}. The relational and the prelational algebras as presented also provide access to the \emph{dynamic compositions}\/ within the basis elements. The comonoid part of a spider provides the external decompositions, the monoid part provides the internal compositions, and the Frobenius condition provides the interface between the two. It keeps the spider together.

\bibliographystyle{plainurl}
\bibliography{CT,language,logic,semantics,PavlovicD,ref-spidspid,coecke,math}

\end{document}